\newtheorem{tm}{Theorem}[section]
\newtheorem{prop}[tm]{Proposition}
\newtheorem{lem}[tm]{Lemma}
\theoremstyle{definition}
\newtheorem{ex}[tm]{Example}
\theoremstyle{remark}
\makeatletter\@namedef{subjclassname@2020}{\textup{2020} Mathematics Subject Classification}\makeatother
\begin{document}

\newgeometry{textheight = 22.5 cm, textwidth=13.5 cm, centering}

\makeatletter
\def\ps@pprintTitle{%
  \let\@oddhead\@empty
  \let\@evenhead\@empty
  \let\@oddfoot\@empty
  \let\@evenfoot\@oddfoot
}
\makeatother

\title{Virtual Lie subgroups of locally compact groups}
\author{Antoni Machowski\footnote{Postal address: Salwatorska Street 23/7, 30-117 Kraków, Poland; e-mail address: antoni.machowski@doctoral.uj.edu.pl; phone number: +48 694 460 464. Declarations of interest: none.}}
\begin{abstract}
We examine subgroups of locally compact groups that are continuous homomorphic images of connected Lie groups and we give a criterion for being such an image. We also provide a new characterisation of Lie groups and a characterisation of groups that are images of connected locally compact groups.
\end{abstract}
\begin{keyword}
Lie group \sep locally compact \sep topological group \sep virtual Lie subgroup \sep analytic subgroup \sep arcwise-connected \sep no small subgroups \sep NSS property.
\MSC[2020]{Primary 22D05, 54D05}.
\end{keyword}
\maketitle
\begin{center}
Faculty of Mathematics and Computer Science, Jagiellonian University, Stanis\l{}awa \L{}ojasiewicza Street 6, 30-348 Krak\'ow, Poland
\end{center}

\section{Outline of the problem}

By a \textit{topological group} we mean a group $H$ equipped with a Hausdorff topology $\tau$ such that the group operation $H\times H \ni(h_1,h_2)\mapsto h_1h_2\in H$ and the inverse mapping $H\ni h\mapsto h^{-1}\in H$ are continuous with respect to $\tau$. By a \textit{Lie group} we mean a topological group $S$ with a topology of a smooth manifold such that the group operation and the inverse mapping are smooth.

Let $S$ be a Lie group and let $H$ be its arbitrary subgroup. We call $H$ a \textit{virtual Lie subgroup} of $S$ if it admits a topology stronger than the induced one which makes it a connected Lie group. The Yamabe Theorem gives a characterisation of such groups which goes as follows:

\begin{tm}[\cite{Yam50}, see also {\cite[Theorem 9.6.1]{HN12}}]
\label{tm:yam}
Let $S$ be a Lie group and let $H$ be its arbitrary subgroup. Then the following conditions are equivalent:
\begin{enumerate}[label=(\roman*)]
\item $H$ is a virtual Lie subgroup of $S$.
\item $H$ is arcwise-connected.
\end{enumerate}
\end{tm}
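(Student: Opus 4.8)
The plan is to prove the two implications separately, (i) $\Rightarrow$ (ii) being immediate and (ii) $\Rightarrow$ (i) carrying all the content. \emph{Proof of (i) $\Rightarrow$ (ii).} Suppose $H$ admits a topology $\tau'$ stronger than the one induced from $S$ and making it a connected Lie group. A connected smooth manifold is arcwise-connected, being connected and locally arcwise-connected, so any two points of $H$ are joined by a $\tau'$-continuous arc; since $\tau'$ refines the induced topology, such an arc is also continuous as a map into $S$. Hence $H$, with the induced topology, is arcwise-connected.

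\emph{Proof of (ii) $\Rightarrow$ (i).} Assume $H$ is arcwise-connected, hence connected; since everything below is local at the identity, we may assume $S=:G$ is connected, with Lie algebra $\mathfrak g$ and exponential map $\exp\colon\mathfrak g\to G$, and we set $\mathfrak h:=\{X\in\mathfrak g:\exp(\mathbb R X)\subseteq H\}$. The first step is to show that $\mathfrak h$ is a Lie subalgebra, that the largest analytic subgroup $H^\ast$ of $G$ contained in $H$ exists, and that $H^\ast=\langle\exp\mathfrak h\rangle$; here \emph{analytic subgroup} means a subgroup of the form $\langle\exp\mathfrak a\rangle$ for a Lie subalgebra $\mathfrak a\subseteq\mathfrak g$, carrying the Lie group topology for which the inclusion into $G$ is a smooth immersion (such subgroups being in bijection with the subalgebras $\mathfrak a$). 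Consider the family $\mathcal A$ of analytic subgroups of $G$ contained in $H$. It is directed upward by inclusion: for $A_1,A_2\in\mathcal A$ the subgroup $\langle A_1\cup A_2\rangle$ is again analytic, its Lie algebra being the subalgebra generated by those of $A_1$ and $A_2$, and, being generated by a subset of the subgroup $H$, it lies in $H$. As the dimensions in $\mathcal A$ are bounded by $\dim G$, a member $H^\ast\in\mathcal A$ of maximal dimension contains every other: by maximality $\langle H^\ast\cup A\rangle\in\mathcal A$ has dimension $\dim H^\ast$ and contains $H^\ast$, so equals it, whence $A\subseteq H^\ast$. Thus $H^\ast$ is the largest element of $\mathcal A$, and its Lie algebra is $\mathfrak h$, because for $X\in\mathfrak h$ the one-parameter subgroup $\exp(\mathbb R X)$ lies in $\mathcal A$, hence in $H^\ast$. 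In particular $\mathfrak h$ is a Lie subalgebra and $H^\ast=\langle\exp\mathfrak h\rangle$.

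It remains, and this is the heart of the matter, to prove $H=H^\ast$. Equivalently, as $H^\ast$ is a subgroup and $H$ is connected, one must show $H^\ast$ is relatively open in $H$, that is, that there is a neighbourhood $W$ of $e$ in $G$ with $H\cap W\subseteq H^\ast$; this is exactly where arcwise-connectedness is indispensable, the analogous statement being false for the subgroup $\mathbb Q\subseteq\mathbb R$, which is not arcwise-connected. I would establish it by proving that $\langle A\rangle$ is an analytic subgroup for every compact arcwise-connected set $A\subseteq G$ with $e\in A$: applying this to the image $A$ of an arc in $H$ from $e$ to a given $h\in H$ places $h$ in an analytic subgroup contained in $H$, hence in $H^\ast$, so $H\subseteq H^\ast$. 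To prove that $\langle A\rangle$ is analytic I would argue by contradiction, assuming a sequence $h_n\in\langle A\rangle\setminus H^\ast$ with $h_n\to e$: writing $h_n=\exp X_n$ with $X_n\to 0$, stripping the $\mathfrak h$-component off $X_n$ by means of the Baker--Campbell--Hausdorff formula, rescaling, and passing to a limiting direction $V\notin\mathfrak h$, one should obtain, via a one-parameter-subgroup extraction in the spirit of the solution of Hilbert's fifth problem (the no-small-subgroups property of Lie groups, together with Gleason-type estimates), and using the arcs joining $e$ to the $h_n$ in order to keep the limiting curve inside $H$ rather than only inside its closure, a one-parameter subgroup of $H$ in a direction lying outside $\mathfrak h$, contradicting the maximality of $H^\ast$. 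Carrying out this extraction rigorously, and in particular controlling the limiting curve so that it genuinely lies in $H$, is the step I expect to be the main obstacle.

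Finally, granting the two steps above, $H=H^\ast$ is an analytic subgroup of $G$: its intrinsic topology is a Lie group topology under which $H$ is connected, and it is stronger than the induced topology since the inclusion $H^\ast\hookrightarrow G$ is a continuous immersion. Hence $H$ is a virtual Lie subgroup of $G$, and therefore of $S$.
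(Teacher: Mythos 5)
This statement is Yamabe's theorem, which the paper does not prove: it is quoted from the literature (\cite{Yam50}, with \cite[Theorem 9.6.1]{HN12} as a textbook reference), so there is no internal proof to compare against. Your write-up of (i) $\Rightarrow$ (ii) is fine, and your reduction of (ii) $\Rightarrow$ (i) to the standard skeleton --- define $\mathfrak h=\{X:\exp(\mathbb R X)\subseteq H\}$, build the largest analytic subgroup $H^\ast\subseteq H$ by a maximal-dimension argument, and reduce $H=H^\ast$ to relative openness of $H^\ast$ in $H$ --- is correct and is how the classical proofs begin.

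However, the proposal has a genuine gap exactly where you say it does, and that gap is the entire content of the theorem. The step ``$\langle A\rangle$ is an analytic subgroup for every compact arcwise-connected $A\ni e$'' is not a lemma you can wave at: it is essentially equivalent to Yamabe's theorem itself. The extraction you sketch (strip the $\mathfrak h$-component via Baker--Campbell--Hausdorff, rescale, pass to a limiting direction $V\notin\mathfrak h$) produces, at best, a one-parameter subgroup contained in $\overline{H}$, and the whole difficulty is that $\exp(\mathbb R V)\subseteq\overline H$ does not contradict the maximality of $H^\ast$, since $\mathfrak h$ is defined by containment in $H$, not in $\overline H$. You acknowledge that the arcs must be used ``to keep the limiting curve inside $H$,'' but no mechanism is given for doing so, and this is precisely the point where all known proofs must do real work (Yamabe's original argument, or the treatment in Hilgert--Neeb via endpoints of absolutely continuous paths with derivatives controlled in $\mathfrak g$, together with a category or measure argument along the arc). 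As it stands the proposal is an accurate outline of the standard strategy with the decisive analytic step missing, so it does not constitute a proof.
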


The Yamabe theorem classifies virtual Lie subgroups nicely by giving an equivalent condition of arcwise-connectivity which is an obvious necessary one. Gleason and Palais, while pursuing a general formula for a stronger topology making a group into a connected Lie group found another characterisation.

\begin{tm}[{\cite[Theorem 7.3]{GP57}}]
\label{tm:gp}
For an arcwise-connected topological group $(H,\tau)$ the following conditions are equivalent:
\begin{enumerate}[label=(\roman*)]
\item There exists a stronger topology on $H$ which makes it a connected Lie group.
\item The topology $\mathscr{M}(\tau)$ makes $H$ a Lie group (where $\mathscr{M}(\tau)$ is given by a base of all arcwise-connected components of all open subsets of $(H,\tau)$ and is always the weakest locally arcwise-connected topology on $H$ stronger than $\tau$).
\item The covering dimensions of all the locally connected continua of $H$ have a common upper bound.
\end{enumerate}
\end{tm}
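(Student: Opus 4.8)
The plan is to establish the cycle (i)$\Rightarrow$(ii)$\Rightarrow$(i) together with, in parallel, (i)$\Rightarrow$(iii)$\Rightarrow$(i), the bulk of the work lying in the two implications that actually manufacture a Lie structure. First I would assemble the elementary facts about the operator $\mathscr{M}$: it is monotone and idempotent, $\mathscr{M}(\tau)$ is the coarsest locally arcwise-connected topology refining $\tau$ (in particular $\mathscr{M}(\tau)\supseteq\tau$), and when $\tau$ is a group topology so is $\mathscr{M}(\tau)$ — left and right translations and inversion are $\tau$-homeomorphisms, hence permute the arc-components of $\tau$-open sets, while continuity of multiplication is preserved because a product of locally arcwise-connected spaces is locally arcwise-connected. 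The one genuinely useful preliminary lemma is that \emph{every $\tau$-continuous path $[0,1]\to H$ is automatically $\mathscr{M}(\tau)$-continuous}: if $C$ is the arc-component of a $\tau$-open set $U$ containing $\gamma(t_0)$, then a small subinterval about $t_0$ has connected, hence $\tau$-path-connected, image inside $U$ and thus inside $C$, so $\gamma^{-1}(C)$ is open. Consequently $(H,\mathscr{M}(\tau))$ is again arcwise-connected, the $\mathscr{M}(\tau)$-continuous lifts of the $\tau$-arcs joining the points of $H$ still joining them. This already settles (ii)$\Rightarrow$(i): $\mathscr{M}(\tau)$ refines $\tau$, makes $H$ a Lie group by assumption, and has just been seen to be connected.

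For (i)$\Rightarrow$(ii), fix a connected Lie topology $\sigma\supseteq\tau$. A Lie group is locally arcwise-connected, so minimality of $\mathscr{M}(\tau)$ gives $\tau\subseteq\mathscr{M}(\tau)\subseteq\sigma$. Since a continuous bijective homomorphism of Lie groups is automatically an isomorphism (it is smooth of constant rank, injectivity forces an immersion and surjectivity a submersion, so it is a bijective covering map), it is enough to show that $\mathscr{M}(\tau)$ is \emph{some} Lie topology — the identity map $(H,\sigma)\to(H,\mathscr{M}(\tau))$ then forces $\mathscr{M}(\tau)=\sigma$. So I would prove the self-contained statement: a locally arcwise-connected topological group $G$ that is a continuous bijective homomorphic image of a connected Lie group $L$ is a Lie group. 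The one-parameter subgroups of $L$ map to nontrivial continuous one-parameter subgroups of $G$, and local arcwise-connectedness of $G$ rules out arbitrarily small subgroups, so $G$ has the NSS property; and, $L$ being compactly generated, local arcwise-connectedness is used once more to promote the image of a compact identity neighbourhood of $L$ to a compact identity neighbourhood of $G$, giving local compactness. The Gleason--Yamabe solution of Hilbert's fifth problem then makes $G$ a Lie group. This NSS-plus-local-compactness argument is the first of the two hard points.

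Knowing $\mathscr{M}(\tau)=\sigma$, the implication (i)$\Rightarrow$(iii) is short. Let $C$ be a locally connected continuum of $H$. Being a Peano continuum it is locally arcwise-connected, so for any $\tau$-open $U$ the trace on $C$ of an arc-component of $U$ is a union of arc-components of $U\cap C$, each relatively $\tau$-open; hence $\tau|_C=\mathscr{M}(\tau)|_C=\sigma|_C$. Thus $C$ carries the subspace topology of the finite-dimensional manifold $(H,\sigma)$, so $\dim C\le\dim\sigma$, and this bound is uniform in $C$.

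Finally, for (iii)$\Rightarrow$(i) I would again take $\mathscr{M}(\tau)$ as the candidate; it is connected, so everything reduces to making $(H,\mathscr{M}(\tau))$ a Lie group out of the dimension hypothesis. The crucial move is to upgrade the uniform bound on the covering dimensions of locally connected continua to \emph{local compactness} of $(H,\mathscr{M}(\tau))$; the heuristic is that in the absence of a compact identity neighbourhood one could splice together ever more ``independent'' short arcs and so build locally connected continua of unbounded covering dimension, contradicting (iii). Once local compactness is in place, a neighbourhood base at $e$ consisting of locally connected continua becomes available, so the same bound makes $(H,\mathscr{M}(\tau))$ finite-dimensional; and a connected, locally connected, finite-dimensional, locally compact group is a Lie group by the Montgomery--Zippin structure theory — once more the solution of Hilbert's fifth problem. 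Extracting local compactness from the bound on continua is the second, and I expect the principal, obstacle of the whole argument.
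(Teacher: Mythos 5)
First, a point of order: the paper offers no proof of this statement --- it is quoted verbatim from Gleason and Palais as a known result (\cite[Theorem 7.3]{GP57}), so there is no internal proof to compare yours against and I can only assess the proposal on its own terms. The soft parts of your plan are sound: that $\mathscr{M}(\tau)$ is again a group topology, that every $\tau$-continuous arc is automatically $\mathscr{M}(\tau)$-continuous (whence (ii)$\Rightarrow$(i)), the sandwich $\tau\subseteq\mathscr{M}(\tau)\subseteq\sigma$ for any locally arcwise-connected refinement $\sigma$, and the derivation of (iii) from (i) by showing $\tau|_C=\mathscr{M}(\tau)|_C=\sigma|_C$ on a Peano continuum $C$ are all correct.

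The two steps you yourself identify as the hard points are, however, genuinely gapped. In (i)$\Rightarrow$(ii) you assert that ``local arcwise-connectedness of $G$ rules out arbitrarily small subgroups''; this is false as a general principle --- $\mathbb{R}^{\omega}$, the very example the paper invokes after Theorem \ref{tm:Charlie}, is locally arcwise-connected and every identity neighbourhood contains a nontrivial subgroup --- so a correct argument must use the Lie preimage $L$ in an essential way, and you do not explain how: the preimage of a small subgroup of $G$ is merely some subgroup of $L$ contained in the preimage of a neighbourhood, which need not be small in $L$, so the NSS property of $L$ gives nothing directly. Likewise, ``promoting'' the compact image $\varphi(V)$ of an identity neighbourhood of $L$ to a neighbourhood of $e_G$ is exactly the open-mapping content of \cite[Theorem 4.1]{GP57}, which the paper itself uses as a black box in proving Theorem \ref{tm:Charlie}; you assert it rather than prove it. Finally, for (iii)$\Rightarrow$(i) you offer only a heuristic (failure of local compactness should produce locally connected continua of unbounded dimension) and explicitly defer its proof; that extraction is precisely where the substance of the Gleason--Palais argument lies, so the principal implication of the theorem remains unproved in your write-up.
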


It is worth noting that the above theorem neither needs local compactness of $H$, nor does it need $H$ to be embeddable into such a group. Unfortunately, the condition (iii) is not easy to verify. On the other hand, the history of solving the Hilbert's Fifth Problem showed that the NSS property is both powerful in its consequences as well as being relatively easy to establish. In this paper we utilise the NSS property along with several structural results of locally compact groups to find a result similar to Theorem \ref{tm:yam} but for subgroups of arbitrary locally compact groups. Theorem \ref{tm:NtoL} states that an arbitrary subgroup of a locally compact group is a continuous homomorphic image of a connected Lie group provided it is arcwise connected and NSS. While arcwise connectivity is an obvious necessary condition, Example \ref{ex:omega} shows that it is not a sufficient one. In theorem \ref{tm:LtoN} we attempt to reverse Theorem \ref{tm:NtoL} but we cannot get the NSS property without an additional assumption which cannot be completely omitted as shown in Example \ref{ex:Q}. In the process of pursuing the generalisation of Teorem \ref{tm:yam}, we make a new characterisation of Lie groups in Theorem \ref{tm:Charlie}. We finish by examining groups that are continuous homomorphic images of connected locally compact groups in Theorem \ref{tm:Charloc} and Example \ref{ex:XY}.

\section{Main notions and preliminary results}

For a topological group $G$ we denote its identity element by $e$ or $e_G$ and by an $e$-\textit{neighbourhood} or $e_G$-\textit{neighbourhood} we mean a neighbourhood of this element in the topology of $G$.
For two topological groups $H$ and $S$ by denoting $H\simeq S$ we mean that the algebraic structures of $H$ and $S$ are isomorphic and by dentoting $H\equiv S$ we mean that $H$ and $S$ are isomorphic as topological groups. For a topological group $G$, by $Aut(G)$ we denote the group of automorphisms of $G$ equipped with the compact-open topology.
By a \textit{virtual Lie group} we mean a topological group that admits a stronger topology that makes it a connected Lie group. We will only consider this for groups that are embeddable in some locally compact group. We will call such groups \textit{pre-locally compact}.
We first make an easy observation that lets us study pre-locally compact groups without an explicit supergroup. We utilise the notion of Ra\u\i kov completion of a topological group (see \cite[Section 3.6]{AT08}).

\begin{prop}[cf. {\cite[Lemma 3.7.3]{AT08}}]
For a topological group $H$ the following conditions are equivalent:
\begin{enumerate}[label=(\roman*)]
\item $H$ is embeddable in some locally compact group.
\item The Ra\u\i kov completion of $H$ is locally compact.
\item There exists an $e_H$--neighbourhood $U$ such that for every $e_H$-neigh-bourhood $V$ there exist $h_1,h_2,\dots,h_n\in H$ such that $U\subseteq\cup_{i=1}^{n}h_i V$
\end{enumerate}
\end{prop}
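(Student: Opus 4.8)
The plan is to pass to the Ra\u\i kov completion $\widehat H$ of $H$ (see \cite[Section 3.6]{AT08}) and to use two standard facts: $H$ sits in $\widehat H$ as a dense subgroup carrying the induced topology and uniformity, and a topological group is Ra\u\i kov complete whenever it is (topologically isomorphic to) a closed subgroup of a locally compact group; in particular every locally compact group is Ra\u\i kov complete, and the completion is unique up to a topological isomorphism fixing $H$. Granting this, (ii)$\Rightarrow$(i) is immediate, since $H$ embeds in the locally compact group $\widehat H$. For (i)$\Rightarrow$(ii), if $H$ is a subgroup of a locally compact group $G$, then its closure $\overline H$ in $G$ is again locally compact, hence Ra\u\i kov complete, and $H$ is dense in it; uniqueness of the completion then gives $\widehat H\equiv\overline H$, which is locally compact.

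For (ii)$\Rightarrow$(iii), I would fix an open $e_{\widehat H}$-neighbourhood $W$ with compact closure and put $U:=W\cap H$, an $e_H$-neighbourhood. Given an $e_H$-neighbourhood $V$, choose an open $e_{\widehat H}$-neighbourhood $\widetilde V$ with $\widetilde V\cap H\subseteq V$ and then a symmetric open $e_{\widehat H}$-neighbourhood $N$ with $NN\subseteq\widetilde V$. By compactness $\overline W\subseteq\bigcup_{i=1}^{n}g_iN$ for some $g_i\in\overline W$, and density of $H$ yields $h_i\in H\cap g_iN$, so that $g_i\in h_iN$ and hence $g_iN\subseteq h_iNN\subseteq h_i\widetilde V$. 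Therefore $\overline W\subseteq\bigcup_{i=1}^n h_i\widetilde V$, and intersecting with $H$ (using $h_i\in H$, so that $h_i\widetilde V\cap H=h_i(\widetilde V\cap H)$) gives $U\subseteq\bigcup_{i=1}^{n}h_i(\widetilde V\cap H)\subseteq\bigcup_{i=1}^{n}h_iV$, as required.

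For (iii)$\Rightarrow$(ii), let $U$ be as in (iii) and set $U':=U\cap U^{-1}$, a symmetric $e_H$-neighbourhood; I claim its closure $\overline{U'}$ in $\widehat H$ is compact. As $\widehat H$ is Ra\u\i kov complete, $\overline{U'}$ is complete in the two-sided uniformity, so by the standard characterisation of compactness of uniform spaces it suffices to show that $U'$ is totally bounded in the two-sided uniformity of $H$, i.e.\ as a subset of $\widehat H$, whence so is its closure. Given an $e_H$-neighbourhood $W$, pick a symmetric $e_H$-neighbourhood $V$ with $VV\subseteq W$; by (iii) there are $h_1,\dots,h_n\in H$ with $U\subseteq\bigcup_i h_iV$. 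For $x\in U'$ one has $x\in U$, hence $x\in h_iV$ for some $i$, and $x^{-1}\in U$, hence $x^{-1}\in h_jV$ and $x\in Vh_j^{-1}$ for some $j$; thus the finite family $\{h_iV\cap Vh_j^{-1}\}_{i,j}$ covers $U'$, and a direct check shows every nonempty member lies in a single ball of the two-sided uniformity determined by $W$. Finally $\overline{U'}$ contains an $e_{\widehat H}$-neighbourhood (again by density of $H$, since $U'$ contains $O\cap H$ with $O$ open in $\widehat H$, so $\overline{U'}\supseteq O$), so $\widehat H$ is locally compact. The one delicate point in the whole argument is exactly here: condition (iii) only produces covers by \emph{left} translates, whereas the completion is formed with the two-sided uniformity, in which left-totally-bounded sets need not have compact closure; passing to the symmetric set $U\cap U^{-1}$ and combining the left covers of $U$ and of $U^{-1}$ is what circumvents this, the remaining steps being routine neighbourhood bookkeeping.
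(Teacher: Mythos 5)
The paper states this proposition without proof, deferring entirely to \cite[Lemma 3.7.3]{AT08}, so there is no in-paper argument to compare yours against; judged on its own, your proof is correct and complete. The equivalence (i)$\Leftrightarrow$(ii) via closures in an ambient locally compact group together with uniqueness of the Ra\u\i kov completion, and (ii)$\Rightarrow$(iii) via a compact-closure neighbourhood and density of $H$, are routine. The real content is (iii)$\Rightarrow$(ii), and you have correctly isolated the one genuine trap there: condition (iii) only yields covers of $U$ by \emph{left} translates, i.e.\ left total boundedness, and a left-totally-bounded subset of a topological group need not be totally bounded for the two-sided uniformity in which the Ra\u\i kov completion is formed (the set of cycles $(1\,2\,\cdots\,k)$, $k\in\mathbb{N}$, in the symmetric group of $\mathbb{N}$ with the pointwise topology is left- but not right-totally bounded). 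Your remedy --- passing to $U'=U\cap U^{-1}$, which is left-totally bounded as a subset of $U$ and right-totally bounded because its inverse also lies in $U$, and refining the two covers into the doubly indexed family $\{h_iV\cap Vh_j^{-1}\}$ whose nonempty members are small for the two-sided uniformity --- is exactly what is needed, and it succeeds precisely because $U$ is a neighbourhood of the identity, so $U'$ is still a neighbourhood and its closure in the completion contains an open set by density. The remaining appeals (closed subsets of complete uniform spaces are complete, complete plus totally bounded equals compact, homogeneity upgrades one compact neighbourhood to local compactness) are all standard.
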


We now make an easy observation, proof of which we leave to the reader. We look at $H$ as a subgroup of its Ra\u\i kov completion, but obviously the result would also be true if we took $H$ to be a subgroup of a locally compact group where $H$ is not necessarily dense.

\begin{prop}
\label{obs:phi}
Let $H$ be a pre-locally compact group and let $G$ be its Ra\u\i kov completion. Then the following conditions are equivalent:
\begin{enumerate}[label=(\roman*)]
\item $H$ admits a stronger topology making it into a connected Lie group.
\item There exist a connected Lie group $S$ and a continuous bijective group homomorphism $\varphi:S\to H$.
\item There exists a connected Lie group $S$ and a continuous group homomorphism $\varphi:S\to G$ such that $\varphi(S)=H$.
\end{enumerate}
Moreover, if these conditions are true, then $S$ and $\varphi$ in point (ii) are unique up to isomorphism.
\end{prop}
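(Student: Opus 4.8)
The plan is to establish the equivalences (i)$\Leftrightarrow$(ii) and (ii)$\Leftrightarrow$(iii), each of which is a routine game with initial and final topologies, and then to isolate the uniqueness clause, which is the only part needing an honest idea. For (i)$\Rightarrow$(ii) I would take $S$ to be the set $H$ equipped with the promised stronger Lie group topology and let $\varphi$ be the identity map: it is a bijective group homomorphism, and it is continuous exactly because the new topology refines the old one. Conversely, for (ii)$\Rightarrow$(i), given $\varphi\colon S\to H$ I would transport the topology of $S$ along the bijection $\varphi$; the resulting topology on $H$ makes $\varphi$ an isomorphism of topological groups, hence makes $H$ a connected Lie group, and it refines the original topology of $H$ precisely because $\varphi$ was continuous. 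The implication (ii)$\Rightarrow$(iii) is immediate: compose $\varphi$ with the inclusion $H\hookrightarrow G$, which is a topological embedding since $H$ carries the subspace topology inherited from its Ra\u\i kov completion. For (iii)$\Rightarrow$(ii), given $\varphi\colon S\to G$ with $\varphi(S)=H$, I would corestrict it to a continuous surjective homomorphism onto $H$ (continuity is automatic, $H$ having the initial topology for the inclusion into $G$) and then pass to the quotient by $N:=\ker\varphi$: since $N$ is a closed normal subgroup, $S/N$ is again a connected Lie group and the induced map $S/N\to H$ is a continuous bijective homomorphism.

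The substantive point is the ``moreover'' clause. Let $\varphi_1\colon S_1\to H$ and $\varphi_2\colon S_2\to H$ be two continuous bijective homomorphisms from connected Lie groups. The obvious candidate $\varphi_2^{-1}\circ\varphi_1\colon S_1\to S_2$ is an abstract group isomorphism, but its continuity is not evident, and this is the crux of the matter. I would get around it by working with the graph
\[
\Gamma:=\{(s_1,s_2)\in S_1\times S_2 : \varphi_1(s_1)=\varphi_2(s_2)\},
\]
which is a closed subgroup of $S_1\times S_2$, being the equaliser of two continuous maps into the Hausdorff group $H$. Hence $\Gamma$ is a Lie group by Cartan's closed subgroup theorem, and as a closed subgroup of the $\sigma$-compact group $S_1\times S_2$ it is itself $\sigma$-compact and locally compact. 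The first-coordinate projection $\pi_1\colon\Gamma\to S_1$ is a continuous homomorphism, injective because $\varphi_2$ is injective and surjective because $\varphi_2(S_2)=H\supseteq\varphi_1(S_1)$; the second projection $\pi_2\colon\Gamma\to S_2$ has the same properties by symmetry.

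To finish I would invoke the classical open mapping theorem for locally compact groups: a continuous bijective homomorphism from a $\sigma$-compact locally compact group onto a locally compact group is an isomorphism of topological groups. Applied to $\pi_1$ and $\pi_2$, this shows they are topological isomorphisms, so $\psi:=\pi_2\circ\pi_1^{-1}\colon S_1\to S_2$ is a topological isomorphism satisfying $\varphi_2\circ\psi=\varphi_1$ by the very definition of $\Gamma$; since a continuous isomorphism between Lie groups is automatically an isomorphism of Lie groups, $\psi$ is the desired isomorphism. The only steps I expect to need care are the closedness and local compactness bookkeeping for $\Gamma$ and citing the open mapping theorem in the right generality; everything else is formal.
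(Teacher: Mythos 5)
Your proof is correct. The paper in fact gives no proof of this proposition (it is explicitly ``left to the reader''), so there is nothing to compare against; your argument --- the formal topology-transport for the three equivalences, and for the uniqueness clause the passage to the closed graph $\Gamma\subseteq S_1\times S_2$ followed by the open mapping theorem for $\sigma$-compact locally compact groups --- is the standard and presumably intended one, and you correctly identify why the graph detour is needed (namely that $H$ itself need not be locally compact, so the open mapping theorem cannot be applied to $\varphi_i$ directly).
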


It is worth noting that the condition (iii) points to the fact that we are examining all continuous homomorphic images of connected Lie groups in locally compact groups, not only those that come from a one-to-one mapping. Now we will move to some serious auxiliary results which we will be using in the next section. We say that a topological group $H$ \textit{has no small subgroups} (or is an \textit{NSS group}) if it admits an $e$--neighbourhood $U$ which contains no non-trivial subgroups. This notion turned out to be a crucial one in finding a solution to the Hilbert's Fifth Problem which is stated below.

\begin{tm}[\cite{Yam53}, see also {\cite[Corollary 5.3.3]{Tao14}}]
\label{tm:hpv}
Let $G$ be a locally compact group. Then the following conditions are equivalent:
\begin{enumerate}[label=(\roman*)]
\item $G$ is a Lie group.
\item $G$ has a topology of a topological manifold.
\item $G$ is NSS.
\end{enumerate}
\end{tm}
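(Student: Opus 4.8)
The equivalence is essentially the resolution of Hilbert's Fifth Problem together with the Gleason--Montgomery--Zippin ``no small subgroups'' criterion, so my plan is to follow the Gleason--Yamabe line in the streamlined form of \cite{Tao14}. Two of the implications are immediate. First, (i)$\Rightarrow$(ii) because a Lie group is by definition a smooth, hence topological, manifold. Second, (i)$\Rightarrow$(iii): fixing a norm on the Lie algebra and an $\varepsilon$ so small that $\exp$ restricts to a diffeomorphism of the open $\varepsilon$-ball $B_\varepsilon$ onto its image, one checks that $\exp(B_{\varepsilon/2})$ contains no nontrivial subgroup, since for a nonzero $X\in B_{\varepsilon/2}$ the first power $\exp(X)^{m}=\exp(mX)$ with $mX\notin B_{\varepsilon/2}$ still has $mX\in B_\varepsilon$ (because $\|mX\|=\|(m-1)X\|+\|X\|<\varepsilon$), so by injectivity of $\exp$ on $B_\varepsilon$ it cannot lie in $\exp(B_{\varepsilon/2})$.

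The substance is therefore (iii)$\Rightarrow$(i), from which I would afterwards derive (ii)$\Rightarrow$(i). For (iii)$\Rightarrow$(i): if $G$ is totally disconnected, van Dantzig's theorem supplies arbitrarily small compact open subgroups, so the NSS property forces $\{e\}$ to be open and $G$ discrete, i.e.\ a $0$-dimensional Lie group. In the general case the heart of the matter is to manufacture from the NSS hypothesis a \emph{Gleason metric}: a left-invariant metric $d$ inducing the topology of $G$ together with a constant $C$ satisfying, near $e$, the escape estimate $d(g^{n},e)\ge C^{-1}\min(1,n\,d(g,e))$ for $n\in\mathbb{N}$ and the commutator estimate $d([g,h],e)\le C\,d(g,e)\,d(h,e)$. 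One builds the underlying gauge by convolving a suitable bump function against Haar measure on a small neighbourhood and pulling it back along left translations; the NSS property is precisely what rules out non-escaping points and hence yields the escape estimate, while iterated commutator bounds produce the second estimate.

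Granted a Gleason metric, the set $L(G)$ of continuous one-parameter subgroups $\mathbb{R}\to G$, topologised by uniform convergence on compacta and equipped with $(sX)(t)=X(st)$ and $(X+Y)(t)=\lim_{n}\bigl(X(t/n)\,Y(t/n)\bigr)^{n}$, is shown to be a finite-dimensional real vector space (local compactness together with the Gleason estimates bounds its dimension), the map $\exp\colon L(G)\to G$, $X\mapsto X(1)$, is a homeomorphism of a neighbourhood of $0$ onto a neighbourhood of $e$, and the conjugation action of $G$ on $L(G)$ gives it a Lie-algebra structure whose Baker--Campbell--Hausdorff expansion reproduces the group law in the chart $\exp$. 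Hence $G$ is locally Euclidean with analytic multiplication, that is, a Lie group. For (ii)$\Rightarrow$(i): a locally Euclidean group is locally compact of finite covering dimension, and the Gleason--Yamabe structure theorem obtained along the way gives it an open subgroup that is a projective limit of Lie groups; finite dimensionality caps the dimensions of the quotients and forces the limit to stabilise, so that open subgroup, and therefore $G$, is a Lie group.

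The main obstacle is squarely the construction of the Gleason metric with \emph{both} the escape and the commutator estimates, followed by the proof that $L(G)$ is finite-dimensional with $\exp$ a local homeomorphism; these are the genuinely hard analytic steps of Gleason's and Yamabe's work, resting on delicate iterated-commutator estimates and an averaging argument against Haar measure. Once they are in place, the passage to an analytic group law and the reduction of the manifold case through the structure theorem are comparatively routine.
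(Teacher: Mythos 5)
The paper does not prove this theorem; it is quoted as the classical resolution of Hilbert's Fifth Problem with a pointer to Yamabe and to Tao's monograph, and your outline follows exactly that cited route (the easy implications done in full, the Gleason-metric/one-parameter-subgroup machinery for NSS~$\Rightarrow$~Lie, and the structure-theorem reduction for the locally Euclidean case). Your sketch is a faithful and correct summary of the standard argument, with the genuinely hard analytic steps correctly identified rather than glossed over as trivial.
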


Lastly, we envoke the celebrated Gleason--Yamabe Theorem on approximating connected locally compact groups by Lie groups.

\begin{tm}[\cite{Gle51}, \cite{Yam53}, see also {\cite[Theorem 1.1.13]{Tao14}}]
\label{tm:GY}
Let $G$ be a connected locally compact group. Then for any $e_G$--neighbourhood $U$ there exists a compact normal subgroup $K$ of $G$ such that $K\subseteq U$ and ${G}/{K}$ is a Lie group.
\end{tm}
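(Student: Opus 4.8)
The plan is to reduce the assertion to the ``no small subgroups'' characterisation of Lie groups recorded in Theorem~\ref{tm:hpv}. A quotient of a locally compact group by a compact normal subgroup is again locally compact, so it suffices to produce, for the given $e_G$-neighbourhood $U$, a compact normal subgroup $K\subseteq U$ with $G/K$ an NSS group; Theorem~\ref{tm:hpv} then promotes ``NSS'' to ``Lie''. Hence the whole force of the statement lies in the existence of such a $K$.

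The compact case is handled first, and softly. By the Peter--Weyl theorem a compact group has a separating family of finite-dimensional unitary representations and therefore embeds as a closed subgroup of a product $\prod_i U(n_i)$ of unitary groups; the preimages in $G$ of identity neighbourhoods coming from finite sub-products form a neighbourhood base at $e$ consisting of compact normal subgroups, and modulo any one of them $G$ becomes a closed subgroup of a finite product of compact Lie groups, hence a Lie group. In particular every compact group is a projective limit of compact Lie groups, which already gives the theorem when $G$ is compact.

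For general connected $G$ I would first observe that, being connected, $G$ is generated by any compact symmetric identity neighbourhood, so it is compactly generated and $\sigma$-compact; this is the setting for the classical Gleason machinery. The analytic core is Gleason's convolution and averaging technique: from a compact symmetric neighbourhood and the left Haar measure one manufactures a sequence of continuous ``bump'' functions whose behaviour under translation measures how rapidly group elements escape small neighbourhoods, yielding an escape-type norm $g\mapsto\|g\|$ that is subadditive up to a bounded constant and --- crucially --- obeys a commutator estimate $\|[g,h]\|\lesssim\|g\|\,\|h\|$ near the identity. This bilinear estimate is what forces a one-parameter-subgroup structure to appear; bootstrapping it (together with a compactness argument) produces, inside the prescribed $U$, a compact normal subgroup $K$ modulo which the escape norm descends to a genuine Gleason metric, so that $G/K$ has no small subgroups. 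Applying Theorem~\ref{tm:hpv} to $G/K$ then completes the argument.

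The main obstacle is exactly this construction: the escape norm, the commutator estimate, and the extraction of the compact normal subgroup $K$ with NSS quotient constitute the analytic heart of the positive solution of Hilbert's fifth problem. By contrast the Peter--Weyl argument in the compact case and the closing appeal to Theorem~\ref{tm:hpv} are essentially formal.
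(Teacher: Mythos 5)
The paper does not prove this statement at all: it is quoted as a classical result (Gleason--Yamabe) with citations to \cite{Gle51}, \cite{Yam53} and Tao's book, so there is no internal proof to compare against. Judged on its own terms, your proposal is a correct \emph{roadmap} of the standard argument rather than a proof. The reduction ``find $K\subseteq U$ compact normal with $G/K$ NSS, then apply Theorem~\ref{tm:hpv}'' is logically sound and non-circular (in Tao's development the NSS-implies-Lie theorem is indeed established before, and used in, the proof of the approximation theorem), and the Peter--Weyl argument you give for the compact case is complete and correct: the kernels of the finite sub-product projections form a neighbourhood base of compact normal subgroups with Lie quotients.

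The genuine gap is the general connected case, where you describe the Gleason machinery --- the escape norm, the commutator estimate $\|[g,h]\|\lesssim\|g\|\,\|h\|$, and the ``bootstrapping'' that extracts $K$ --- but do not carry out any of it. That extraction is not a routine compactness argument: one must show that the elements of small escape norm near the identity generate (after taking a closure) a genuine \emph{compact normal subgroup} contained in $U$, and that killing it removes all small subgroups; this requires the subgroup trick and the quantitative estimates in full, and it is precisely the content of the theorem. As you yourself acknowledge, everything of substance is deferred to this step, so what you have written is an accurate summary of where the proof lives (essentially \cite[Theorem 1.1.13]{Tao14} and the chapters leading to it) rather than a self-contained argument. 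For the purposes of this paper that is arguably appropriate, since the authors likewise treat the statement as a black box; but it should be presented as a citation with an indication of the proof strategy, not as a proof.
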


\section{Main results}

We start this section by trying to establish sufficient conditions for being a virtual Lie group among pre-locally compact groups. Arcwise connectivity is an obvious necessary condition, but Example \ref{ex:omega} (see Section 4) shows that it is not sufficient on its own, contrary to the case of subgroups of Lie groups. However, in the case of Lie groups, one condition that is satisfied even by non-closed subgroups is the NSS condition. This motivates us to state the following.

\begin{tm}
\label{tm:NtoL}
Let $H$ be an arcwise-connected pre-locally compact NSS group. Then $H$ is a virtual Lie group.
\begin{proof}
Let $G$ be the Ra\u\i kov completion of $H$. Then $G$ is a connected locally compact group. Let $U_0$ be an $e_H$--neighbourhood without nontrivial subgroups. Let $U$ be an $e_G$--neighbourhood such that $U_0=U\cap H$. Then we get
\begin{equation}
\label{FcapH}
\forall{F\text{ subgroup of }G} : (F\subseteq U \implies F\cap H = \{e_G\}).
\end{equation}
Let $\mathscr{K}=\{K\lhd G \text{ compact}, K\subseteq U, G/K \text{ Lie}\}$. Then by Theorem \ref{tm:GY}, $\mathscr{K}$ is nonempty and $\bigcap \mathscr{K}=\{e_G\}$. Moreover $\mathscr{K}$ is closed under finite intersections. For $K\in \mathscr{K}$ let $\pi_K:G\to G/K$ be the cannonical projection. Then $\pi_K(H)$ is an arcwise connected subgroup of the connected Lie group $G/K$ and by Theorem \ref{tm:yam} there exists a topology $\tau_{K}$ on $\pi_K(H)$ making it into a connected Lie group.

To show that $H$ is a virtual Lie group let us fix $K\in\mathscr{K}$ again. Then by (\ref{FcapH}), $K\cap H= \{e_G\}$ so $\pi_{K}|_H:H\to G/K$ is injective and thus $\xi_{K}:=\pi_{K}|_H:H\to \pi_{K}|_H(H)$ is bijective. Hence $\xi_{K}$ induces a topology on $H$ by shifting the topology from $\pi_{K}(H)$ which makes $H$ into a connected Lie group. For any $K\in\mathscr{K}$ we will denote by $\tau(K)$ the original quotient topology on $\pi_K(H)$, by $\tau_K$ the topology of a connected Lie group on $\pi_K(H)$ and by $\tau_*^K$ the topology on $H$ given by shifting $\tau_K$ through $\xi_K$. Now we aim to prove:
\begin{equation}
\label{KinL}
K\subseteq L, K,L\in \mathscr{K} \implies \tau_*^K = \tau_*^L.
\end{equation}

First we will show that $\pi_K(H)\cap\pi_K(L)=\{e_{G/K}\}$. Indeed, if we have $\pi_K(g)\in\pi_K(H)$ for some $g\in L$, then $g\in H \cdot K$, so $g=h\cdot k$ for some $h\in H, k\in K$ and $L\ni gk^{-1}=h\in H$ which gives $\pi_K(g)=e_{G/K}$.
Now let $\rho:G/K\to G/L$ be the natural homomorphism. Then $\rho(\pi_K(H))=\pi_L(H)$ and $\rho|_{\pi_K(H)}:\pi_K(H)\to\pi_L(H)$ is a continous bijection because $\ker\rho=\pi_K(L)$ and thus $\ker\rho|_{\pi_K(H)}=\pi_K(L)\cap\pi_K(H)=\{e_{G/K}\}$.
Now $\xi_K:(H,\tau_*^K)\to (\pi_K(H),\tau_{K})$ and $\xi_L:(H,\tau_*^L)\to (\pi_L(H),\tau_{L})$ are isomorphisms of topological groups and $j_K: (\pi_K(H),\tau_K)\to (\pi_K(H),\tau(K))$ and  $\rho|_{\pi_K(H)}:(\pi_K(H),\tau(K))\to(\pi_L(H),\tau(L))$ are continuous bijective homomorphisms. Now $\rho\circ j_K \circ \xi_K:(H, \tau_*^K)\to (\pi_L(H),\tau(L))$ is a bijective continuous homomorphism from a connected Lie group to a virtual Lie group so by the uniqueness in Proposition \ref{obs:phi} we get that $\xi^{-1}_L\circ\rho\circ j_K\circ \xi_K:(H,\tau_*^K)\to(H,\tau_*^L)$ is an isomorphism of topological groups. Now we will show that this isomorphism is in fact the identity mapping. For $h\in H$, let $h':=\xi^{-1}_L\circ\rho\circ j_K\circ \xi_K(h)$. Then we get $\rho(\xi_K(h))=\xi_L(h')$ so $\rho(\pi_K(h))=\pi_L(h')$ so by the definition of $\rho$ we have $\pi_L(h)=\pi_L(h')$ but $\pi_L$ is injective so $h=h'$ which concludes the proof of (\ref{KinL}). It follows that for any $K,L\in \mathscr{K}$ we have $\tau_*^K=\tau_*^L$. Let us denote this topology by $\tau_*$. Then $(H,\tau_*)$ is a connected Lie group such that for every $K\in\mathscr{K}$ we get that $\pi_K|_H:(H,\tau_*)\to G/K$ is continuous. Now it is enough to show that $id:(H,\tau_*)\to H\subseteq G$ is continuous. Pick any $K\in\mathscr{K}$. Let $h_\sigma\in H$, $h_\sigma\to e_H$ w.r. to $\tau_*$. Then $\pi_K(h_\sigma)\to e_{G/K}$. Let $V$ be a compact $e_G$--neighbourhood. Since $\pi_K$ is open we have that $\pi_K(V)$ is a compact $e_{G/K}$--neighbourhood. Then there exists $\sigma_0$ such that for $\sigma\geq \sigma_0$ we have $\pi_K(h_\sigma)\in \pi_K(V)$ and then $h_\sigma\in K \cdot V$. Let $Z:=K\cdot V$. Then $Z$ is compact. Suppose that $h_\sigma$ does not converge to $e_G$. But $\{h_\sigma\}\subseteq Z$ so there is a subsequence $\Lambda$ such that $h_{\sigma(\lambda)}$ converges to $a\neq e_G$. Then we have $h_{\sigma(\lambda)}\to e_H$ w.r. to $\tau_*$ and $h_{\sigma(\lambda)}\to a\neq e_G$. Take any $L\in\mathscr{K}$. Then $\pi_L(h_{\sigma(\lambda)})$ converges to $e_{G/L}$ and to $\pi_L(a)$ in $G/L$ so $\pi_L(a)=e_{G/L}$ and $a\in \ker(\pi_L)=L$. Hence $a\in \bigcap \mathscr{K}=\{e_G\}$ which leads to a contradiction.
\end{proof}
\end{tm}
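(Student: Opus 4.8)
The plan is to sit $H$ inside its Ra\u\i kov completion $G$. Since $H$ is dense in $G$ and arcwise-connected, $G$ is a connected locally compact group, so the Gleason--Yamabe machinery applies to it. The role of the NSS hypothesis is to produce an $e_H$--neighbourhood $U_0$ containing no nontrivial subgroup; picking an $e_G$--neighbourhood $U$ with $U\cap H=U_0$ then guarantees that every subgroup of $G$ contained in $U$ meets $H$ only in the identity. Feeding such a $U$ into Theorem \ref{tm:GY} yields a nonempty family $\mathscr{K}$ of compact normal subgroups $K\lhd G$ with $K\subseteq U$ and $G/K$ a Lie group; this family is directed downwards (it is stable under finite intersections) and satisfies $\bigcap\mathscr{K}=\{e_G\}$.

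For a fixed $K\in\mathscr{K}$, the projection $\pi_K\colon G\to G/K$ is injective on $H$, because $K\cap H=\{e_G\}$, and $\pi_K(H)$ is an arcwise-connected subgroup of the Lie group $G/K$. By the Yamabe Theorem \ref{tm:yam} it therefore carries a finer topology making it a connected Lie group; transporting that topology back to $H$ along $\pi_K|_H$ produces a connected-Lie-group topology $\tau_*^K$ on $H$. What must be verified is that $\tau_*^K$ is independent of $K$, and that the resulting topology is finer than the topology $H$ inherits from $G$.

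For the independence, take $K\subseteq L$ in $\mathscr{K}$ and the canonical map $\rho\colon G/K\to G/L$, whose kernel is $\pi_K(L)$. A short computation using $L\subseteq U$ (so that $H\cap L=\{e_G\}$) shows $\pi_K(H)\cap\pi_K(L)=\{e_{G/K}\}$, hence $\rho$ restricts to a continuous bijection $\pi_K(H)\to\pi_L(H)$. Composing the Lie isomorphism of $(H,\tau_*^K)$ with $\pi_K(H)$ in its Lie topology, the continuous identity from that onto $\pi_K(H)$ in its inherited quotient topology, and $\rho$, one obtains a continuous bijective homomorphism from the connected Lie group $(H,\tau_*^K)$ onto the virtual Lie group $\pi_L(H)$. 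The uniqueness clause of Proposition \ref{obs:phi} then identifies this with the defining map of $\tau_*^L$, and a brief diagram chase---using that $\pi_L$ is injective on $H$---shows the comparison map $(H,\tau_*^K)\to(H,\tau_*^L)$ is the identity of $H$, so $\tau_*^K=\tau_*^L$. Directedness of $\mathscr{K}$ upgrades this to a single topology $\tau_*$, for which every $\pi_K|_H\colon(H,\tau_*)\to G/K$ is continuous.

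Finally I would check that $\mathrm{id}\colon(H,\tau_*)\to H\subseteq G$ is continuous by a net argument: if $h_\sigma\to e$ in $\tau_*$ then, fixing a compact $e_G$--neighbourhood $V$ and some $K\in\mathscr{K}$, the images $\pi_K(h_\sigma)$ eventually land in the $e_{G/K}$--neighbourhood $\pi_K(V)$, forcing $h_\sigma$ eventually into the compact set $KV$; any subnet of $(h_\sigma)$ converging to $a\in G$ then has $\pi_L(a)=e_{G/L}$ for all $L\in\mathscr{K}$, whence $a\in\bigcap\mathscr{K}=\{e_G\}$, so $h_\sigma\to e_G$. I expect the only genuinely delicate point to be the independence of $\tau_*^K$ from $K$: the rest is an assembly of Theorems \ref{tm:yam} and \ref{tm:GY} with the NSS property, but showing that the transported Lie topologies \emph{literally coincide}, rather than merely being abstractly isomorphic, is where the uniqueness in Proposition \ref{obs:phi} is essential.
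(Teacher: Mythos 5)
Your proposal is correct and follows essentially the same route as the paper's own proof: Ra\u\i kov completion, the NSS neighbourhood forcing $K\cap H=\{e_G\}$ for the Gleason--Yamabe subgroups, Yamabe's theorem on each quotient $\pi_K(H)$, the uniqueness clause of Proposition \ref{obs:phi} to identify the transported topologies, and the final net argument in a compact set $KV$. You also correctly single out the independence of $\tau_*^K$ from $K$ as the delicate step, which is exactly where the paper spends its effort.
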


We now established that for pre-locally compact groups the NSS property and arcwise connectivity form a sufficient condition for being a virtual Lie group. Unfortunately, Example \ref{ex:Q} (see Section 4) shows that pre-locally compact virtual Lie groups need not be NSS. In the following theorem we prove that pre-locally compact virtual Lie groups are NSS if we add some additional conditions, which sadly cannot be dropped entirely.

\begin{tm}
\label{tm:LtoN}
Let $H$ be a pre-locally compact virtual Lie group such that $Z(H)$ is compact or torsion. Then 
\begin{enumerate}[label=(\arabic*)]
\item $H$ is arcwise connected,
\item there exists a compact group $K\lhd G$ such that $G/K$ is a Lie group and $H\cap K=\{e_G\}$ where $G$ is the Ra\u\i kov completion of $H$,
\item $H$ is NSS.
\end{enumerate}
In particular, $H$ is a normal subgroup of its Ra\u\i kov completion.
\begin{proof}
(1) is immediate from $H$ being a virtual Lie group regardless of the other conditions. (3) follows fom (2) since we get that $\pi_K|_H:H\to\pi_K(H)$ is an injective continuous homomorphism, so the NSS property for $\pi_K(H)$ implies the NSS property for $H$.
Now let $G:$ be the Ra\u\i kov completion of $H$. Then $G$ is a connected locally compact group. Let $\mathscr{K}$ be the family of compact normal subgroups of $G$ containted in $U$ and such that the quotient group is a Lie group. Since $H$ is a virtual Lie group, let $S$ be a connected Lie group and let $\varphi:S\to H$ be a continuous bijective homomorphism. Now let $\widetilde{K}$ be an element of $\mathscr{K}$ such that $\varphi^{-1}(\widetilde{K})$ is of smallest possible dimension. We will show that $\varphi^{-1}(\widetilde{K})$ is discrete. Suppose to the contrary that there is $x\in \varphi^{-1}(\widetilde{K})_0$ such that $x$ is not the identity of $S$. Then by the Gleason-Yamabe Theorem there exists $L\in \mathscr{K}$ such that $\varphi(x)\notin L$. Then $\widetilde{K}\cap L\in\mathscr{K}$ and $\varphi^{-1}(\widetilde{K}\cap L)$ is a closed subgroup of $\varphi^{-1}(\widetilde{K})$ which does not contain $\varphi^{-1}(\widetilde{K})_0$ so $\varphi^{-1}(\widetilde{K}\cap L)$ has smaller dimension than $\varphi^{-1}(\widetilde{K})$ which leads to a contradiction. Now $\varphi^{-1}(\widetilde{K})\lhd S$ is discrete and thus it is central. This means that $\varphi^{-1}(\widetilde{K})\equiv \mathbb{Z}^d \times F$ where $F$ is a finite abelian group (cf. {\cite[Corollary 8.A.23(2) (p. 183)]{CdlH16}}). Let now $F_*:=\{z\in\varphi^{-1}(\widetilde{K}):z\neq e_S, \text{ord}(z)<\infty\}$. Then $\varphi(F_*)$ is finite and $e_G\notin\varphi(F_*)$ so by Gleason-Yamabe Theorem there exists $K_*\in \mathscr{K}$ such that $K_*\cap\varphi(F_*)=\emptyset$. $K:=\widetilde{K}\cap K_*$ belongs to $\mathscr{K}$ and is torsion-free. Now $K\cap H=\varphi(\varphi^{-1}(K))\simeq \mathbb{Z}^d$ and it is a closed subgroup of $Z(H)$ from which it follows that $K\cap H=\{e_G\}$.
\end{proof}
\end{tm}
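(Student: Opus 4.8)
The plan is to settle (1) and (3) at once and to put all the weight on (2), from which (3) and the closing remark will drop out. Write $G$ for the Ra\u\i kov completion of $H$; since $H$, being a virtual Lie group, is arcwise connected (this is already (1): the identity map from the finer connected-Lie-group topology is continuous and preserves arcs), $G=\overline H$ is a \emph{connected} locally compact group, locally compact because $H$ is pre-locally compact. For (3): once (2) has furnished a compact $K\lhd G$ with $G/K$ a Lie group and $H\cap K=\{e_G\}$, the restriction $\pi_K|_H\colon H\to G/K$ is an injective continuous homomorphism into the NSS group $G/K$ (Theorem~\ref{tm:hpv}), and pulling a subgroup-free identity neighbourhood back through $\pi_K|_H$ exhibits one in $H$, so $H$ is NSS. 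For the final sentence: with the same $K$, $\pi_K(H)$ is arcwise connected, hence analytic in $G/K$ by Theorem~\ref{tm:yam}, and it is dense in $G/K$ because $H$ is dense in $G$ and $\pi_K$ is a continuous open surjection; a dense analytic subgroup of a connected Lie group is normal (its Lie algebra, being invariant under the dense subgroup, is $\mathrm{Ad}(G/K)$-invariant since the stabiliser of a subspace is closed, hence an ideal), so $\pi_K(H)\lhd G/K$, and carrying this back through $\pi_K$---together with the resulting control over the compact kernel $K$---gives $H\lhd G$.

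So everything hinges on (2). Fix, via Proposition~\ref{obs:phi}, a connected Lie group $S$ and a continuous bijective homomorphism $\varphi\colon S\to H\subseteq G$. Let $\mathscr K$ be the set of compact normal subgroups $K\lhd G$ with $G/K$ a Lie group; by the Gleason--Yamabe Theorem~\ref{tm:GY} it is nonempty, closed under finite intersections, and $\bigcap\mathscr K=\{e_G\}$ (any $g\neq e_G$ avoids some $K\in\mathscr K$, again by Theorem~\ref{tm:GY}). For each $K\in\mathscr K$ the preimage $\varphi^{-1}(K)$ is a closed normal subgroup of the connected Lie group $S$; choose $\widetilde K\in\mathscr K$ for which $\dim\varphi^{-1}(\widetilde K)$ is minimal. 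The crucial step is to show $\varphi^{-1}(\widetilde K)$ is discrete: if its identity component were positive-dimensional, pick $x\neq e_S$ in it; then $\varphi(x)\neq e_G$, so there is $L\in\mathscr K$ with $\varphi(x)\notin L$, whence $\widetilde K\cap L\in\mathscr K$ and $\varphi^{-1}(\widetilde K\cap L)$ is a closed subgroup of $\varphi^{-1}(\widetilde K)$ missing $x$, so it omits the identity component and has strictly smaller dimension---contradicting the choice of $\widetilde K$. Hence $\varphi^{-1}(\widetilde K)$ is a discrete normal, therefore central, subgroup of the connected Lie group $S$, so $\varphi^{-1}(\widetilde K)\equiv\mathbb Z^{d}\times F$ with $F$ finite abelian.

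It remains to remove the finite part. Let $F_{*}$ be the finite set of nontrivial torsion elements of $\varphi^{-1}(\widetilde K)$; then $\varphi(F_{*})$ is a finite subset of $G\setminus\{e_G\}$, so Theorem~\ref{tm:GY} provides $K_{*}\in\mathscr K$ with $K_{*}\cap\varphi(F_{*})=\emptyset$. Put $K:=\widetilde K\cap K_{*}\in\mathscr K$; then $\varphi^{-1}(K)$ is torsion-free, discrete and central in $S$, hence a finitely generated free abelian group, and consequently $H\cap K=\varphi(\varphi^{-1}(K))$ is a finitely generated free abelian subgroup of $Z(H)$ that is closed in $H$ (being $H\cap K$ with $K$ closed in $G$). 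If $Z(H)$ is compact this is a compact group, hence trivial, since a compact group cannot be a nontrivial free abelian group; if $Z(H)$ is torsion it has no element of infinite order, again forcing it to be trivial. Either way $H\cap K=\{e_G\}$, and (2) is proved.

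The real obstacle, as this outline makes plain, is step (2): the dimension-minimality argument pinning down $\widetilde K$, and the second invocation of Gleason--Yamabe used to peel off the torsion, together produce a central \emph{free abelian} residue $H\cap K$, and it is precisely the hypothesis on $Z(H)$ that kills it. Everything else---(1), the reduction of (3) to (2), and the passage from (2) to normality---is comparatively soft, though in the normality clause one should be a little careful, since $\varphi$ is only continuous and not a homeomorphism, so the description of $H\cap K$ is purely algebraic and it is the compactness of $Z(H)$, rather than any discreteness of $H\cap K$, that does the work in the compact case.
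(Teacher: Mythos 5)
Your treatment of (1), (2) and (3) is essentially the paper's own proof: the same reduction of (3) to (2) via the injective continuous map $\pi_K|_H$, the same choice of $\widetilde K\in\mathscr K$ minimising $\dim\varphi^{-1}(\widetilde K)$, the same contradiction via a second application of Gleason--Yamabe to show $\varphi^{-1}(\widetilde K)$ is discrete hence central of the form $\mathbb Z^d\times F$, the same removal of the torsion part by intersecting with a $K_*$ avoiding $\varphi(F_*)$, and the same final appeal to $Z(H)$ being compact or torsion to kill the free abelian residue (your explicit remark that a compact group cannot be a nontrivial free abelian group, via countability, is a welcome elaboration the paper leaves implicit).

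The one place where you go beyond the paper is the closing clause ``$H\lhd G$'', which the paper asserts but does not argue, and here your sketch has a genuine gap. From the normality of the dense analytic subgroup $\pi_K(H)$ in $G/K$ you can only conclude that $\pi_K^{-1}(\pi_K(H))=HK$ is normal in $G$, not that $H$ is: for $g\in G$ and $h\in H$ you get $ghg^{-1}=h'k$ with $h'\in H$, $k\in K$, and nothing said so far forces $k=e_G$. The phrase ``together with the resulting control over the compact kernel $K$'' does not close this; $H\cap K=\{e_G\}$ tells you the decomposition $h'k$ is unique, but not that the $K$-component of $ghg^{-1}$ vanishes. Some further argument (e.g.\ running the same normality statement over a cofinal subfamily of $\mathscr K$ and controlling the resulting net in the compact group $K$, or exploiting that conjugation by $g$ induces the identity on $HK/K\simeq H$) is needed, and as written neither your proposal nor the paper supplies it.
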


The following equivalence is an obvious consequence of Theorems \ref{tm:NtoL} and \ref{tm:LtoN}.

\begin{tm}
Let $H$ be a pre-locally compact group such that $Z(H)$ is compact or torsion. Then the following conditions are equivalent:
\begin{enumerate}[label=(\roman*)]
\item $H$ is a virtual Lie group.
\item $H$ is an arcwise-connected NSS group.
\end{enumerate} 
\end{tm}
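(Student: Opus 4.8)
The plan is to observe that this equivalence merely repackages the two preceding theorems, so the proof amounts to checking that the hypotheses line up correctly.

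For the implication (ii)\,$\Rightarrow$\,(i), I would invoke Theorem \ref{tm:NtoL} verbatim: it asserts precisely that an arcwise-connected pre-locally compact NSS group is a virtual Lie group. Note that this direction does not use the standing hypothesis on $Z(H)$ at all, so nothing needs to be reconciled --- the three properties in (ii) together with pre-local compactness are exactly the inputs of Theorem \ref{tm:NtoL}.

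For the converse (i)\,$\Rightarrow$\,(ii), I would apply Theorem \ref{tm:LtoN}. Here the extra assumption that $Z(H)$ is compact or torsion is exactly the standing hypothesis of that theorem, so it applies; conclusion (1) of Theorem \ref{tm:LtoN} gives that $H$ is arcwise connected and conclusion (3) gives that $H$ is NSS, which together are precisely statement (ii). (Conclusion (2) and the final sentence of Theorem \ref{tm:LtoN} are not needed here.)

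There is no real obstacle: all the substantive work lives in Theorems \ref{tm:NtoL} and \ref{tm:LtoN}, and the only point worth a moment's care is that the hypothesis ``$Z(H)$ compact or torsion'' is needed solely for the forward direction (i)\,$\Rightarrow$\,(ii), while (ii)\,$\Rightarrow$\,(i) holds unconditionally --- which is why the theorem is phrased as a clean equivalence under that single assumption. Accordingly the written proof can be a one- or two-line citation of the two theorems.
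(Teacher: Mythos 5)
Your proposal is correct and matches the paper exactly: the paper states this equivalence is ``an obvious consequence of Theorems \ref{tm:NtoL} and \ref{tm:LtoN}'', which is precisely your two-citation argument. Your observation that the hypothesis on $Z(H)$ is only needed for (i)\,$\Rightarrow$\,(ii) is also consistent with how the paper presents the two underlying theorems.
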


Even though we were not able to provide an equivalent condition for being a virtual Lie group in full generality, our considerations enable us to find a new equivalent condition for being a Lie group.

\begin{tm}
\label{tm:Charlie}
Let $H$ be an arbitrary topological group. Then the following conditions are equivalent:
\begin{enumerate}[label=(\roman*)]
\item $H$ is a Lie group.
\item $H$ is a pre-locally compact locally arcwise-connected NSS group.
\end{enumerate}
\begin{proof}
All conditions in (ii) are easily satisfied by a Lie group. For the other implication we can assume that $H$ is connected. Since $H$ is also locally arcwise-connected, it is arcwise connected. This with the NSS condition by Theorem \ref{tm:NtoL} means that $H$ is a virtual Lie group. Now let $\varphi:S\to H$ be a continous bijective homomorphism between a connected Lie group $S$ and a locally arcwise-connected group $H$. Since a connected Lie group is locally compact and second countable, by {\cite[Theorem 4.1]{GP57}}, $\varphi$ is an isomorphism of topological groups.
\end{proof}
\end{tm}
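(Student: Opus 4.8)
The plan is to handle the two implications separately; (i)$\Rightarrow$(ii) is a bookkeeping of standard facts, while (ii)$\Rightarrow$(i) combines Theorem~\ref{tm:NtoL} with a rigidity argument for continuous bijections.

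For (i)$\Rightarrow$(ii): a Lie group is second countable and locally compact, hence pre-locally compact; it is locally Euclidean, hence locally arcwise-connected; and it is NSS, e.g.\ by Theorem~\ref{tm:hpv}, or directly because a sufficiently small chart domain contains no nontrivial subgroup. So this direction needs no real argument.

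For (ii)$\Rightarrow$(i) I would first reduce to $H$ connected. Local arcwise-connectivity implies local connectivity, so the identity component $H_0$ is open in $H$; since a topological group having an open subgroup that is a Lie group is itself a Lie group, and since $H_0$ plainly inherits pre-local compactness, local arcwise-connectivity and the NSS property, it is enough to prove that $H_0$ is a Lie group. Replacing $H$ by $H_0$, assume $H$ is connected; being also locally arcwise-connected, $H$ is then arcwise-connected. Now Theorem~\ref{tm:NtoL} applies and yields a connected Lie group $S$ with a continuous bijective homomorphism $\varphi:S\to H$, so that the Lie topology of $S$ transported along $\varphi$ is a topology on $H$ stronger than its given topology $\tau$ and making it a connected Lie group.

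The remaining, genuinely load-bearing, step is to show that $\tau$ itself is already this Lie topology, equivalently that $\varphi$ is a homeomorphism. Here local arcwise-connectivity is essential (dropping it is exactly what makes virtual Lie groups strictly more general). I see two routes. The quick one uses Theorem~\ref{tm:gp}: $(H,\tau)$ is arcwise-connected and satisfies condition~(i) there, so $\mathscr{M}(\tau)$ makes $H$ a Lie group; but $\mathscr{M}(\tau)$ is the weakest locally arcwise-connected topology on $H$ refining $\tau$, and $\tau$ is already locally arcwise-connected, so $\mathscr{M}(\tau)=\tau$ and $(H,\tau)$ is a Lie group, hence $H\equiv S$. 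Alternatively one argues directly: $S$ is $\sigma$-compact and locally compact while $H$ is locally arcwise-connected, and a Baire-category / open-mapping argument then forces the continuous bijective homomorphism $\varphi$ to be open (this is \cite[Theorem~4.1]{GP57}). I expect this final step to be the only real obstacle; everything preceding it is soft topology together with one invocation of Theorem~\ref{tm:NtoL}.
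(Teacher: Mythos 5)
Your proposal is correct and follows essentially the same route as the paper: reduce to the connected case, deduce arcwise-connectivity from connectedness plus local arcwise-connectivity, apply Theorem~\ref{tm:NtoL} to obtain a continuous bijective homomorphism $\varphi:S\to H$ from a connected Lie group, and conclude via \cite[Theorem~4.1]{GP57} that $\varphi$ is open --- your second route for the final step is exactly the paper's argument, and your first route via Theorem~\ref{tm:gp} and the identity $\mathscr{M}(\tau)=\tau$ is an equally valid alternative. Your explicit justification of the reduction to $H_0$ (openness of the identity component under local connectivity, and the fact that a group with an open Lie subgroup is Lie) fills in a step the paper leaves implicit.
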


To see that the pre-locally compactness is necessary, it suffices to look at any infinitely dimensional separable Banach space (such as $\mathbb{R}^{\omega}$).

We finish this chapter by examining groups that are continuous images of connected locally compact groups.

\begin{tm}
\label{tm:Charloc}
Let $H$ be an arbitrary topological group. Then the following conditions are equivalent:
\begin{enumerate}[label=(\roman*)]
\item $H$ is the image of a connected locally compact group by a continuous homomorphism.
\item There exist subgroups $L,K$ of $H$ where $L$ is a Lie group, $L\lhd H$, $K$ is compact and connected such that $H=KL$.
\item There exist subgroups $L,N$ of $H$ where $L$ is a virtual Lie group, $N$ is compact, $L,N\lhd H$ and $L,N$ commute pointwise such that $H=NL$ and $N\subseteq K$ where $K$ is some compact connected subgroup of $H$.
\end{enumerate}
\begin{proof}
The implication (iii)$\implies$(ii) is obvious. For (ii)$\implies$(i) let us denote by $\widetilde{L}$ the group $L$ with the topology of a connected Lie group. For $y\in K$ and $x\in L$ let $\varphi_y(x)=yxy^{-1}$. Observe that for $y\in K, \varphi_y\in Aut(L)$ and since $\widetilde{L}$ has the topology given by path components of open sets of $L$, $\varphi_y\in Aut(\widetilde{L})$. Now let us consider $\varphi:K\ni y\mapsto \varphi_y\in Aut(\widetilde{L})$. Note that $\widetilde{L}\rtimes_\varphi K$ is a locally compact topological space and $\widetilde{L}\rtimes_\varphi K\ni (x,y)\to x\cdot y\in H$ is a continuous surjective homomorphism. It is enough to show that $\widetilde{L}\rtimes_\varphi K$ is a topological group.  We recall that since $\widetilde{L}$ is a connected Lie group, $Aut(\widetilde{L})$ is locally compact and $\sigma$-compact. Now, since $K$ is compact and $Aut(\widetilde{L})$ is locally compact and $\sigma$-compact, after verifying that the graph of $\varphi$ is closed we get that $\varphi$ is continuous and thus $\widetilde{L}\rtimes_\varphi K$ is indeed a topological group. For the implication (i)$\implies$(iii) let $G$ be a locally compact connected group and let $\varphi:G\to H$ be a continuous surjective homomorphism. By Iwasawa's Theorem (\cite[Theorem 11]{Iwa49}, see also \cite[Corollary 13.20]{HM06}), there exist a connected Lie group $S$, a connected compact group $C$ and a continuous surjective homomorphism $\psi:S\times C\to G$. Let now $\theta:=\varphi\circ\psi$. Then $\theta:S\times C\to H$ is also a surjective continuous homomorphism. Then $L:=\theta(S)$ is a virtual Lie group and since $S\lhd S\times C$, $L\lhd H$. Now let $N=\theta(C)$. Then $N$ is a compact subgroup of $H$ and $N, L$ commute pointwise. Let now $M:=\psi(C)$. Then $M$ is a compact subgroup of a locally compact connected group $G$ and thus by \cite[Theorem 12.77]{HM07a} there exists a compact connected group $P$ such that $M\subseteq P\subseteq G$. Now if we put $K=\varphi(P)$ we get $N=\varphi(M)\subseteq\varphi(P)=K$ and $K$ is a compact connected subgroup of $G$.
\end{proof}
\end{tm}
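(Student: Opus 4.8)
The plan is to prove the cycle of implications (iii)$\implies$(ii)$\implies$(i)$\implies$(iii). The implication (iii)$\implies$(ii) is pure bookkeeping: with $L,N,K$ as in (iii), the set $KL$ is a subgroup of $H$ because $L\lhd H$, and $N\subseteq K$ gives $H=NL\subseteq KL\subseteq H$, so $H=KL$ with $K$ compact connected and $L\lhd H$ a virtual Lie group.

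For (ii)$\implies$(i) the idea is to realise $H$ as a continuous homomorphic image of a semidirect product $\widetilde L\rtimes_\varphi K$, where $\widetilde L$ denotes $L$ equipped with its Lie topology. This topology is connected by the definition of a virtual Lie group, and by Theorem~\ref{tm:gp} it is the intrinsic refinement $\mathscr M(\tau)$ of the subspace topology $\tau$ of $L$, so it is preserved by every homeomorphism of $(L,\tau)$. As a topological space $\widetilde L\rtimes_\varphi K=\widetilde L\times K$ is connected and locally compact, so it remains to put a compatible group structure on it and to map it onto $H$. For $y\in K$, conjugation $x\mapsto yxy^{-1}$ is a homeomorphism of $(L,\tau)$ (as $L\lhd H$), hence also a homeomorphism $\varphi_y$ of $\widetilde L$, and $\varphi\colon K\ni y\mapsto\varphi_y\in Aut(\widetilde L)$ is a homomorphism into the automorphism group with the compact-open topology. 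The map $(x,y)\mapsto xy$ from $\widetilde L\rtimes_\varphi K$ into $H$ is then a homomorphism; it is continuous because $\widetilde L\to H$ and the inclusion $K\hookrightarrow H$ are continuous and $H$ is a topological group, and it is surjective because $H=KL$. It remains to see that $\widetilde L\rtimes_\varphi K$ is a topological group, equivalently that $\varphi$ is continuous. Here one uses that $Aut(\widetilde L)$ is locally compact and $\sigma$-compact for a connected Lie group $\widetilde L$: the graph $\Gamma$ of $\varphi$ is closed in $K\times Aut(\widetilde L)$ --- if $y_i\to y$ in $K$ and $\varphi_{y_i}\to\psi$ in $Aut(\widetilde L)$, then $\varphi_{y_i}(x)=y_ixy_i^{-1}\to yxy^{-1}$ in $H$ while also $\varphi_{y_i}(x)\to\psi(x)$ in $\widetilde L$ and hence in $H$, forcing $\psi=\varphi_y$ since $\widetilde L\to H$ is injective --- so $\Gamma$ is itself locally compact and $\sigma$-compact, the continuous bijective homomorphism $\Gamma\to K$ is open by the open mapping theorem, hence a homeomorphism, and therefore $\varphi$ is continuous.

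For (i)$\implies$(iii), let $G$ be a connected locally compact group with a continuous surjection onto $H$. Iwasawa's structure theorem provides a connected Lie group $S$, a connected compact group $C$ and a continuous surjection $\psi\colon S\times C\to G$; composing with the given map yields a continuous surjection $\theta\colon S\times C\to H$. Put $L:=\theta(S)$ and $N:=\theta(C)$. Then $L$ is a continuous homomorphic image of the connected Lie group $S$ --- factoring through $S/\ker(\theta|_S)$, a connected Lie group mapping continuously and bijectively onto $L$ and hence endowing $L$ with a Lie topology refining the one induced from $H$ --- so $L$ is a virtual Lie group; $N$ is compact; $L,N\lhd H$ because $S,C\lhd S\times C$; $L$ and $N$ commute pointwise because $S$ and $C$ do in $S\times C$, whence $H=\theta(S\times C)=\theta(S)\,\theta(C)=LN=NL$; and $N$ is the image of the compact connected subgroup $\psi(C)\subseteq G$, so $N$ is already compact connected and one may take $K:=N$ (or enlarge $\psi(C)$ to a maximal compact connected subgroup $P\subseteq G$ and set $K:=\varphi(P)$, using that every compact subgroup of a connected locally compact group lies in such a $P$). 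This establishes (iii).

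The one genuinely non-formal step, and the expected main obstacle, is the continuity of the conjugation action $\varphi\colon K\to Aut(\widetilde L)$ in (ii)$\implies$(i) --- that is, the passage from the a priori merely algebraic semidirect product $\widetilde L\rtimes_\varphi K$ to an actual topological group, necessarily connected and locally compact. This rests on the structural input that $Aut$ of a connected Lie group is locally compact and $\sigma$-compact, combined with an automatic-continuity argument (closed graph together with the open mapping theorem) exploiting the compactness of $K$. The remaining implications are soft: (iii)$\implies$(ii) is trivial, and (i)$\implies$(iii) is a direct application of Iwasawa's theorem together with the structure theory of compact subgroups of connected locally compact groups.
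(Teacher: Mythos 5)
Your proof is correct and follows essentially the same route as the paper's: the semidirect product $\widetilde{L}\rtimes_\varphi K$ with continuity of $\varphi$ obtained from the closed graph plus the open mapping theorem for (ii)$\implies$(i), and Iwasawa's theorem for (i)$\implies$(iii). You supply more detail than the paper on the closed-graph/automatic-continuity step, and you correctly observe that in (i)$\implies$(iii) one may simply take $K:=N=\theta(C)$, which is already compact and connected, making the paper's detour through $P\subseteq G$ unnecessary.
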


As Example \ref{ex:XY} will show, $K$ in (ii) or (iii) need not be normal in $H$.

\section{Counterexamples}

In this section we provide important examples illustrating our characterisation of virtual Lie groups.

Let $\mathscr{A}=\{A \text{ subgroup of } \mathbb{R} \, : \, \overline{A}=\mathbb{R}\}$ and let $\mathbb{T}$ stand for the circle group. For $A\in\mathscr{A}$ let $G_A=\{\chi:A\to\mathbb{T} \text{ homomorphism}\}$ (equipped with the pointwise multiplication and the topology of pointwise convergence) be the Pontryagin dual (see {\cite[Section 9.5, p. 604]{AT08}}) of $A$ equipped with the discrete topology and let $H_A=\{A\ni a \mapsto e^{ati}\in \mathbb{T}\,|\,t\in\mathbb{R}\}$.

\begin{prop}
\label{prop:A}
For $A\in\mathscr{A}$:
\begin{enumerate}[label=(\arabic*)]
\item $G_A$ is a compact topological group and $H_A$ is its dense subgroup and thus a pre-locally compact group.
\item \label{item:virt} $H_A$ is a virtual Lie group.
\item\label{item:point} $H_A$ is NSS $\iff$ $\exists{a,b\in A\setminus\{0\}}:\frac{a}{b}\notin \mathbb{Q}$.
\item\label{item:metr} $H_A$ is non-metrisable $\iff$ $A$ is uncountable.
\end{enumerate}
\begin{proof}
That $G_A$ is a compact topological group follows from {\cite[Proposition 9.5.5]{AT08}} and that $H_A$ is dense in $G_A$ follows from {\cite[Corollary 9.6.1]{AT08}}. \ref{item:virt} is obvious and \ref{item:metr} follows from {\cite[Corollary 9.6.7]{AT08}}. We only need to show \ref{item:point}. We start by observing that $H_A$ not being NSS is equivalent to the following:
\begin{equation}\tag{*}
\label{charcont}
\parbox{0.9\textwidth}{For all finite $F\subseteq A$ and all $\varepsilon>0$ there exists $x>0$ such that $|e^{kaxi}-1|<\varepsilon$ for any $k\in \mathbb{Z}$ and $a\in F$.
}
\end{equation}

First let us assume that $H_A$ is not NSS. Let $\varepsilon<1/4$ and let $F=\{a,b\}$ for arbitrarily chosen $a,b\in A\setminus\{0\}$. Then by (*) there exists $x>0$ such that $\langle \{e^{axi},e^{bxi}\}\rangle$ is contained in $\mathbb{T}^+=\{z\in\mathbb{T}:Re(z)>0\}$. Then by the NSS property of $\mathbb{T}$, $e^{axi}=1$, so $ax\in 2\pi \mathbb{Z}$. Similarily, $bx\in 2\pi i \mathbb{Z}$, so $\frac{a}{b}\in\mathbb{Q}$. Now let us assume that any two nonzero elements of $A$ are commensurable. Then any finite $F\subseteq A$ is of the form $\{c\cdot\frac{p_1}{q_1},\ldots,c\cdot\frac{p_n}{q_n}\}$ for some $c>0$, $n\in\mathbb{N}$ and $p_i,q_i\in\mathbb{Z}$, $q_i>0$ for $i=1,\ldots,n$. If we put $q=lcm(q_1,\ldots,q_n)$ then $F=\{\frac{c}{q}\cdot k_1,\ldots, \frac{c}{q}\cdot k_n\}$ for some $k_i\in\mathbb{Z}$, $i=1,\ldots,n$. Now for $x=\frac{2q\pi}{c}$ we get $e^{kaxi}=1$ for all $a\in F$ and $k\in\mathbb{Z}$.
\end{proof}
\end{prop}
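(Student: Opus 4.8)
The plan is to prove the four assertions separately, dispatching (1), (2) and (4) by standard Pontryagin duality and concentrating the real work on the NSS criterion (3). Throughout I use the continuous homomorphism $\iota\colon\mathbb{R}\to G_A$, $\iota(t)=(a\mapsto e^{ati})$, whose image is exactly $H_A$. For (1): since $A$ is discrete, $G_A$ is compact; and $H_A$ is dense in $G_A$ because a continuous character of $G_A$ vanishing on $H_A$ corresponds, under the Pontryagin isomorphism $\widehat{\widehat A}\cong A$, to evaluation at some $a\in A$, and $\iota(t)(a)=e^{ati}=1$ for all $t\in\mathbb{R}$ forces $a=0$; hence the only such character is trivial and $\overline{H_A}=G_A$. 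Being a subgroup of the locally compact group $G_A$, $H_A$ is pre-locally compact.

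For (2): the same map $\iota$ is injective, for if $t\neq0$ then, $A$ being dense in $\mathbb{R}$, there is $a\in A$ with $0<at<2\pi$, whence $\iota(t)(a)=e^{ati}\neq1$; so $\iota$ is a continuous bijection from the connected Lie group $\mathbb{R}$ onto $H_A$, and Proposition \ref{obs:phi} shows $H_A$ is a virtual Lie group. For (4): $G_A$ is the Ra\u\i kov completion of $H_A$ and completion preserves metrisability in both directions, so $H_A$ is metrisable iff $G_A$ is; but $G_A=\widehat A$ is compact, hence metrisable iff second countable iff its discrete dual $A$ is countable.

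The substantial point is (3). First I would observe that the sets $W_{F,\varepsilon}=\{x\in H_A:|x(a)-1|<\varepsilon\text{ for all }a\in F\}$, with $F\subseteq A$ finite and $\varepsilon>0$, form a base of identity neighbourhoods in $H_A$, and that the cyclic group generated by $\iota(x)$ lies in $W_{F,\varepsilon}$ exactly when $|e^{kaxi}-1|<\varepsilon$ for all $k\in\mathbb{Z}$ and $a\in F$. Hence $H_A$ fails to be NSS if and only if the condition $(*)$ holds: for every finite $F\subseteq A$ and every $\varepsilon>0$ there is $x>0$ with $\langle\iota(x)\rangle\subseteq W_{F,\varepsilon}$. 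I then establish (3) in its contrapositive form, namely that $H_A$ is not NSS iff any two nonzero elements of $A$ are commensurable. For the forward direction, assume $(*)$; fixing arbitrary $a,b\in A\setminus\{0\}$, apply $(*)$ with $F=\{a,b\}$ and $\varepsilon$ small enough that $\{z\in\mathbb{T}:|z-1|<\varepsilon\}$ contains no nontrivial subgroup of $\mathbb{T}$, obtaining $x>0$ with $e^{axi}=e^{bxi}=1$; then $ax,bx\in2\pi\mathbb{Z}\setminus\{0\}$ and $a/b=ax/bx\in\mathbb{Q}$. For the converse, if all nonzero elements of $A$ are commensurable then any finite $F\subseteq A$ has the form $\{(c/q)k_1,\dots,(c/q)k_n\}$ for some $c>0$, $q\in\mathbb{N}$ and $k_i\in\mathbb{Z}$ (write the elements as rational multiples of a common $c$ and clear denominators); then $x=2\pi q/c$ gives $ax\in2\pi\mathbb{Z}$, hence $e^{kaxi}=1$, for every $a\in F$ and $k\in\mathbb{Z}$, so $(*)$ holds and $H_A$ is not NSS.

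I expect the only delicate steps to be the translation between identity neighbourhoods of the dual group $G_A$ and the explicit trigonometric inequalities in $(*)$, together with the choice of a small enough $\varepsilon$ so that the NSS property of $\mathbb{T}$ upgrades ``$\langle\iota(x)\rangle$ is contained in a small ball'' to ``$\iota(x)(a)=1$''. Once these are in place the argument is essentially bookkeeping, and (1), (2) and (4) fall out of the duality set-up.
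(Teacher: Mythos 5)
Your proposal is correct and follows essentially the same route as the paper: the characterisation $(*)$ of failure of NSS via basic neighbourhoods $W_{F,\varepsilon}$, the reduction to two-element $F$ with a small $\varepsilon$ and the NSS property of $\mathbb{T}$ in one direction, and clearing denominators to find $x=2\pi q/c$ in the other. The only difference is that you spell out the duality arguments for (1), (2) and (4), which the paper simply delegates to references.
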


The above result lets us provide several interesting examples. The first one shows that the NSS condition is not necessary for being a virtual Lie group.

\begin{ex}
\label{ex:Q}
$H_\mathbb{Q}$ is an example of a virtual Lie group which is not NSS.
\end{ex}

Below is an example of a pre-locally compact non-metrisable NSS group, which is interesting since locally compact NSS groups are always metrisable.

\begin{ex}
\label{ex:R}
$H_\mathbb{R}$ is an example of a virtual Lie group which is NSS but is not metrisable.
\end{ex}


The following example illustrates why the condition of arcwise-connectivity is not sufficient for being a virtual Lie group.

\begin{ex}
\label{ex:omega}
$(H_\mathbb{Q})^\omega$ is an example of a pre-locally compact group that is arcwise connected but is not a virtual Lie group and it does not even admit a stronger topology which makes it a connected locally compact group.
\end{ex}

The above example is the only one that does not follow trivially from Proposition \ref{prop:A}, but we can see very quickly how it follows from it. If $(H_\mathbb{Q})^\omega$ was a virtual Lie group, then we could find a continuous bijective homomorphism $\varphi:S\to H$ where $S$ is some connected Lie group. Now we also have a continuous bijective homomorphism $\psi:\mathbb{R}^{\omega}\to H$ by the definition of $H_Q$. Now $\psi^{-1}\circ \varphi$ is a bijective homomorphism between two Polish groups so after veryfying that its graph is closed we get that it is a homeomorphism which leads to contradiction.

The last example shows that not every group that is a continuous homomorphic image of a conncected locally compact group is a pointwise product of two normal subgroups, one virtual Lie group and one compact connected group.

\begin{ex}
\label{ex:XY}
Let $X$ be the universal covering of $SL_2(\mathbb{R})$. Then $Z(X)\equiv\mathbb{Z}$ (cf. \cite[Proposition 17.2.3(3)]{HN12}) and thus there exists $x\in X$ such that $\langle x \rangle = Z(X)$. Let $Y$ be the group of 2--addic integers. Recall that such a group is compact, monothetic and totally disconnected. In particular, there exists $y\in Y$ such that $\overline{\langle y \rangle}=Y$. Let $u:\langle x \rangle\to \langle y \rangle$ be a homomorphism given by $u(x)=y$. Let $G=(X\times Y)/\Gamma(u)$. Then $G$ is a connected locally compact group that does not admit normal subgroups $K,L$ such that $L$ is a virtual Lie group, $K$ is a compact connected group and $G=KL$.
\end{ex}

We will end the article by proving the above statement, but first we introduce some auxiliary results.

\begin{lem}
\label{lem:figh}
Let $\varphi:G\to H$ be an open continuous surjective homomorphism between two locally compact groups. Then $\varphi^{-1}(H_0)=\overline{G_0\cdot \ker\varphi}$.
\end{lem}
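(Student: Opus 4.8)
The plan is to reduce the statement to the corresponding fact about identity components under a quotient map and to use the characterisation of $\varphi^{-1}(H_0)$ as the smallest subgroup whose image is all of $H_0$ modulo the obvious constraints. First I would observe that both sides of the claimed equality are closed subgroups of $G$ containing $\ker\varphi$, so the statement is really about subgroups of $H$ after passing to the quotient; since $\varphi$ is open and continuous and surjective, $G/\ker\varphi \equiv H$ as topological groups, and under this identification $\overline{G_0\cdot\ker\varphi}/\ker\varphi = \overline{\varphi(G_0)}$ while $\varphi^{-1}(H_0)/\ker\varphi = H_0$. So it suffices to prove $\overline{\varphi(G_0)} = H_0$.

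For that, I would argue two inclusions. The inclusion $\overline{\varphi(G_0)}\subseteq H_0$ is immediate: $\varphi(G_0)$ is a connected subset of $H$ containing $e_H$, hence contained in $H_0$, and $H_0$ is closed. For the reverse inclusion, the key point is that $\varphi$ is an \emph{open} map, so $\varphi(G_0)$ has nonempty interior in $H$ — indeed $G_0$ is open in $G$ when $G$ is locally compact? No: $G_0$ need not be open in general. Here is where I expect the main obstacle: $G_0$ is only open when $G$ is locally connected, so one cannot simply say $\varphi(G_0)$ is open. Instead I would use that $\varphi(G_0)$ is a subgroup and that $\varphi(G)/\varphi(G_0)$ is a quotient; the cleaner route is to show $H/\overline{\varphi(G_0)}$ is totally disconnected. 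Since $\varphi$ is open and surjective, it descends to an open continuous surjection $\bar\varphi: G/G_0 \to H/\overline{\varphi(G_0)}$ (one checks $\varphi^{-1}(\overline{\varphi(G_0)}) \supseteq G_0$ so this is well-defined), and $G/G_0$ is totally disconnected because $G$ is locally compact, hence so is its continuous open image; a totally disconnected group $H/\overline{\varphi(G_0)}$ forces $H_0 \subseteq \overline{\varphi(G_0)}$, which is what we want.

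So the steps, in order, are: (1) reduce to showing $\overline{\varphi(G_0)}=H_0$ by quotienting out $\ker\varphi$, using that $\varphi$ open continuous surjective gives $G/\ker\varphi\equiv H$; (2) get $\overline{\varphi(G_0)}\subseteq H_0$ from connectedness of $\varphi(G_0)$ and closedness of $H_0$; (3) check $\overline{\varphi(G_0)}$ is a closed normal subgroup of $H$ and that $\varphi$ induces an open continuous surjective homomorphism $G/G_0 \to H/\overline{\varphi(G_0)}$; (4) invoke the structure theory of locally compact groups: $G/G_0$ is totally disconnected (in fact, for $G$ locally compact, $G/G_0$ is a totally disconnected locally compact group), and a continuous open surjective image of a totally disconnected locally compact group is totally disconnected; (5) conclude $(H/\overline{\varphi(G_0)})_0$ is trivial, hence $H_0\subseteq\overline{\varphi(G_0)}$, completing the proof.

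The main obstacle, as noted, is resisting the temptation to treat $G_0$ as open; the fix is to work on the quotient side with $G/G_0$ and exploit total disconnectedness rather than trying to produce an interior point of $\varphi(G_0)$ directly. A secondary technical point to verify carefully is that $\bar\varphi$ is genuinely well-defined and open: well-definedness needs $\ker\varphi \cdot G_0 \subseteq \varphi^{-1}(\overline{\varphi(G_0)})$, which holds since $\varphi(\ker\varphi\cdot G_0) = \varphi(G_0)\subseteq\overline{\varphi(G_0)}$, and openness descends from openness of $\varphi$ composed with the open quotient maps $G\to G/G_0$ and $H\to H/\overline{\varphi(G_0)}$.
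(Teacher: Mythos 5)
The paper states Lemma \ref{lem:figh} without proof, so there is no argument of the author's to compare yours against; judged on its own, your proof is correct. The reduction to $\overline{\varphi(G_0)}=H_0$ via the topological isomorphism $G/\ker\varphi\equiv H$ is sound (both sides are closed, $\ker\varphi$-saturated subgroups, so the correspondence of closed subgroups applies), and the reverse inclusion goes through exactly as you say: $\bar\varphi\colon G/G_0\to H/\overline{\varphi(G_0)}$ is a well-defined open continuous surjection, its target is therefore a quotient of the totally disconnected locally compact group $G/G_0$ by the closed kernel, hence totally disconnected (by van Dantzig, the compact open subgroups $U$ give $\bigcap_U UN=N$), forcing $H_0\subseteq\overline{\varphi(G_0)}$. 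You were also right to flag, and correctly avoid, the trap of treating $G_0$ as open.
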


\begin{lem}
\label{lem:XYG}
Let $X$ be a locally compact connected group and let $Y$ be a locally compact totally disconnected group. Let $X_1$ and $Y_1$ be subgroups of $X$ and $Y$ respectively and let $u:X_1\to Y_1$ be a continuous surjective homomorphism such that:
\begin{enumerate}
\item $[X,X_1]\subseteq \ker(u)$,
\item $Y_1 \subseteq Z(Y)$.
\end{enumerate}
Then the group $(X\times Y)/\Gamma(u)$ is connected if and only if $\overline{Im(u)}=Y$.
\end{lem}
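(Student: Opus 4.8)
The plan is to play the connectedness of $X$ off against the total disconnectedness of $Y$, reducing both directions to one computation inside $G := (X\times Y)/\Gamma(u)$. Write $q\colon X\times Y\to G$ for the quotient homomorphism and put $A := q(X\times\{e\})$. The computation is: for $x_1\in X_1$ the pair $(x_1,u(x_1))$ lies in $\Gamma(u)$, so $q(e,u(x_1)) = q(x_1,e)^{-1}\in A$; since $Y_1 = Im(u)$, this already gives $q(\{e\}\times Y_1)\subseteq A$. (Conditions (1) and (2) are precisely what force $\Gamma(u)$ to be a normal subgroup of $X\times Y$ — using $[X,X_1]\subseteq\ker(u)\subseteq X_1$ and $u(X_1)\subseteq Z(Y)$ — so that $G$ is a topological group; I would note this but not dwell on it, as the statement presupposes $G$ is a genuine topological group.)

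First suppose $\overline{Im(u)}=Y$, i.e. $\overline{Y_1}=Y$. Then $A$ is the continuous image of the connected group $X\times\{e\}$, hence connected, so $\overline{A}$ is a connected subgroup of $G$. By continuity of $q$ we get $q(\{e\}\times Y) = q(\{e\}\times\overline{Y_1})\subseteq\overline{q(\{e\}\times Y_1)}\subseteq\overline{A}$. Consequently every element $q(x,y) = q(x,e)\,q(e,y)$ lies in $A\cdot\overline{A}=\overline{A}$, so $G=\overline{A}$ and $G$ is connected.

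For the converse, suppose $G$ is connected. By (2), $\overline{Y_1}\subseteq Z(Y)$, so $\overline{Y_1}$ is a closed normal subgroup of $Y$ and $Y/\overline{Y_1}$ is a locally compact totally disconnected group: the image of a compact open subgroup of $Y$ under the open map $Y\to Y/\overline{Y_1}$ is again a compact open subgroup, so van Dantzig's theorem applies to the quotient. The homomorphism $p\colon X\times Y\to Y/\overline{Y_1}$, $(x,y)\mapsto y\overline{Y_1}$, is continuous and surjective, and it kills $\Gamma(u)$ since $u(x_1)\in Y_1\subseteq\overline{Y_1}$; hence it factors as $p=\bar p\circ q$ with $\bar p\colon G\to Y/\overline{Y_1}$ continuous and surjective. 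Then $Y/\overline{Y_1}=\bar p(G)$ is simultaneously connected and totally disconnected, hence trivial, i.e. $\overline{Im(u)}=Y$.

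The one point requiring care is the handling of closures in the first implication: $A$ is the image of a closed subgroup under the merely open (not closed) map $q$, so it need not itself be closed, and one genuinely has to pass to the connected subgroup $\overline{A}$ rather than argue with $A$ directly. Apart from that, the argument is a straightforward combination of van Dantzig's theorem, the universal property of the quotient by $\Gamma(u)$, and the standing hypotheses (1) and (2).
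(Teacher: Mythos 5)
Your proof is correct, but it takes a genuinely different route from the paper's. The paper handles both implications in a single computation: it applies Lemma \ref{lem:figh} to the open canonical projection $\pi\colon X\times Y\to (X\times Y)/\Gamma(u)$ to get $\pi^{-1}(G_0)=\overline{(X\times Y)_0\cdot\Gamma(u)}=\overline{(X\times\{e\})\cdot\Gamma(u)}=X\times\overline{Im(u)}$ (using that $(X\times Y)_0=X\times\{e\}$ because $X$ is connected and $Y$ totally disconnected), so $G_0=\pi(X\times\overline{Im(u)})$ and the equivalence with $\overline{Im(u)}=Y$ drops out at once. You never invoke Lemma \ref{lem:figh} and instead argue the two directions separately: the forward one by exhibiting $G$ as the closure of the connected subgroup $q(X\times\{e\})$ (using $q(e,u(x_1))=q(x_1,e)^{-1}$ to absorb $Y_1$, then density of $Y_1$), and the converse by projecting $G$ onto the totally disconnected group $Y/\overline{Y_1}$ and noting that a connected totally disconnected group is trivial. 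Your version is more self-contained in that it does not presuppose the identity-component formula of Lemma \ref{lem:figh} (which the paper states without proof), at the price of invoking van Dantzig in the converse direction. One small imprecision to tighten: the existence of a single compact open subgroup of $Y/\overline{Y_1}$ does not by itself give total disconnectedness (the circle group is a compact open subgroup of itself); what you need is that van Dantzig provides a neighbourhood base at $e_Y$ of compact open subgroups, whose images under the open quotient map form such a base in $Y/\overline{Y_1}$, and a Hausdorff group with a neighbourhood base of open subgroups at the identity has trivial identity component. With that phrasing corrected, the argument is complete.
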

\begin{proof}
First observe that both conditions (1) and (2) being satisfied is equivalent to $\Gamma(u)$ being a normal subgroup of $X\times Y$. Having established that, we want to determine when $((X\times Y)/\Gamma(u))_0=(X\times Y)/\Gamma(u)$. Let $\pi:(X\times Y)\to (X\times Y)/\Gamma(u)$ be the canonical projection. Then $((X\times Y)/\Gamma(u))_0=\pi(\pi^{-1}(((X\times Y)/\Gamma(u))_0))$. However by Lemma \ref{lem:figh}, $\pi^{-1}(((X\times Y)/\Gamma(u))_0)=\overline{(X\times Y)_0\cdot \ker\pi}$. Thus $((X\times Y)/\Gamma(u))_0=\pi(\overline{(X\times Y)_0\cdot \ker\pi})=\pi(\overline{(X\times\{e\})\cdot \Gamma(u)})=\pi(X\times \overline{Im(u)})$ which finishes the proof.
\end{proof}

The following lemma is the classical Open Mapping Theorem for locally compact $\sigma$-compact groups.

\begin{lem}
\label{lem:omt}
Let $G$ and $H$ be locally compact groups and let $G$ be $\sigma$-compact. Then every continuous surjective group homomorphism $\varphi\colon G\to H$ is open.
\end{lem}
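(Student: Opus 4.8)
The plan is to run the classical Baire category argument for open mappings of topological groups, exploiting that $G$ is $\sigma$-compact and that $H$, being locally compact and Hausdorff, is a Baire space. First I would reduce to a local statement: it suffices to prove that $\varphi(U)$ is an $e_H$-neighbourhood for every $e_G$-neighbourhood $U$. Indeed, granting this, let $O\subseteq G$ be open and $h\in\varphi(O)$; choosing $g\in O$ with $\varphi(g)=h$, the set $g^{-1}O$ is an $e_G$-neighbourhood, so $h^{-1}\varphi(O)=\varphi(g^{-1}O)$ is an $e_H$-neighbourhood, i.e. $\varphi(O)$ is a neighbourhood of $h$; as $h\in\varphi(O)$ was arbitrary, $\varphi(O)$ is open.

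So fix an $e_G$-neighbourhood $U$. Using local compactness of $G$, pick a compact symmetric $e_G$-neighbourhood $V$ with $V\cdot V\subseteq U$. Since $G$ is $\sigma$-compact, it is a countable union of compact sets, each of which is covered by finitely many translates of $V$; hence $G=\bigcup_{n\in\mathbb{N}}g_n V$ for some sequence $(g_n)$ in $G$. Applying $\varphi$ and using surjectivity gives $H=\bigcup_{n\in\mathbb{N}}\varphi(g_n)\varphi(V)$. Now $\varphi(V)$ is compact as a continuous image of a compact set, hence closed in the Hausdorff group $H$, and so is each translate $\varphi(g_n)\varphi(V)$. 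Thus $H$ is a countable union of closed sets; since a locally compact Hausdorff space is a Baire space, some $\varphi(g_n)\varphi(V)$ has non-empty interior, and therefore so does $\varphi(V)$. To conclude, pick a non-empty open $W\subseteq\varphi(V)$ and a point $h_0\in W$, say $h_0=\varphi(v_0)$ with $v_0\in V$; then $h_0^{-1}W$ is an open $e_H$-neighbourhood and
\[
h_0^{-1}W\subseteq\varphi(v_0)^{-1}\varphi(V)=\varphi\bigl(v_0^{-1}V\bigr)\subseteq\varphi\bigl(V^{-1}V\bigr)=\varphi(V\cdot V)\subseteq\varphi(U),
\]
so $\varphi(U)$ contains an $e_H$-neighbourhood, completing the proof.

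I do not expect a genuine obstacle here: this is the textbook argument. The only point requiring a little care is to take $V$ \emph{compact} rather than merely open, so that $\varphi(V)$ is automatically closed and the Baire argument applies to $\varphi(V)$ directly; otherwise one would work with $\overline{\varphi(V)}$ and then face the extra task of removing the closure. Everything else — the reduction to the local statement and the final translation trick — is routine manipulation of neighbourhoods in a topological group.
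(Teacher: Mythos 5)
Your argument is correct and complete: it is the standard Baire-category proof of the Open Mapping Theorem for locally compact groups, and every step (the reduction to showing $\varphi(U)$ is an $e_H$-neighbourhood, the choice of a compact symmetric $V$ with $VV\subseteq U$ so that $\varphi(V)$ is closed, and the final translation) checks out. The paper itself gives no proof of this lemma, citing it as classical, so there is nothing to compare against beyond noting that yours is the expected textbook argument.
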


We leave the following result as an easy exercise.

\begin{prop}
\label{lem:fixy}
Let $\varphi:X\to Y$ be an open continuous surjection where $X$ is locally compact and $Y$ is compact. Then there exists a compact subset $K$ of $X$ such that $\varphi(K)=Y$.
\end{prop}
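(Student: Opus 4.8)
The plan is to exploit the local compactness of $X$ to produce finitely many compact pieces whose images under $\varphi$ already exhaust $Y$, the openness of $\varphi$ to convert these pieces into an open cover of $Y$, and the compactness of $Y$ to reduce to a finite subcover.

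First I would fix, for each point $x\in X$, an open neighbourhood $U_x$ of $x$ whose closure $\overline{U_x}$ is compact; such a neighbourhood exists precisely because $X$ is locally compact. Since $\varphi$ is open, each $\varphi(U_x)$ is open in $Y$, and since $\varphi$ is surjective the family $\{\varphi(U_x)\}_{x\in X}$ covers $Y$: indeed every point of $Y$ equals $\varphi(x)$ for some $x\in X$, and then $\varphi(x)\in\varphi(U_x)$. Next I would invoke the compactness of $Y$ to extract a finite subcover, say $Y=\varphi(U_{x_1})\cup\dots\cup\varphi(U_{x_n})$.

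Setting $K:=\overline{U_{x_1}}\cup\dots\cup\overline{U_{x_n}}$, this $K$ is compact as a finite union of compact sets, and
\[
\varphi(K)\supseteq\varphi(U_{x_1})\cup\dots\cup\varphi(U_{x_n})=Y,
\]
while trivially $\varphi(K)\subseteq Y$; hence $\varphi(K)=Y$, as required. There is essentially no obstacle in this argument, which is why the statement is left as an exercise: the only point requiring attention is that the openness of $\varphi$ is what guarantees that the sets $\varphi(U_x)$ are open (so that compactness of $Y$ can be applied to them), while local compactness supplies the compact closures and surjectivity ensures the cover. It is worth remarking that continuity of $\varphi$ is not actually used, nor is any group structure; the result holds for open surjections between arbitrary topological spaces with $X$ locally compact and $Y$ compact.
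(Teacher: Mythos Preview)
Your argument is correct and is exactly the standard one; the paper itself leaves this proposition as an easy exercise and gives no proof, so there is nothing to compare against. Your closing remark that continuity is unnecessary and that only openness, surjectivity, local compactness of $X$, and compactness of $Y$ are used is also accurate.
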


We are now ready to justify Example \ref{ex:XY}.

\begin{prop}
All the assertions of Example \ref{ex:XY} are correct.
\end{prop}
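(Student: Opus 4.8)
The plan is to take the elementary assertions for granted or by citation --- namely that $Z(X)\equiv\mathbb{Z}$ with $x$ a generator, that the $2$-adic integers $Y$ form a compact, monothetic, totally disconnected group with $y$ a topological generator, and that $u$ is a well-defined homomorphism --- and then to prove three substantive claims: (A) $G$ is connected and locally compact; (B) $Z(G)\equiv Y\equiv\mathbb{Z}_2$, and consequently $G$ is not a virtual Lie group; (C) $G$ admits no pair $K,L$ as in the statement. The organising idea is that $G$ is a central extension $1\to Y\to G\to PSL_2(\mathbb{R})\to 1$, and the contradiction in (C) will arise from confronting the simplicity of $PSL_2(\mathbb{R})$ with the failure of the NSS property in the infinite profinite group $Y$.

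For (A): the subgroup $\langle x\rangle=Z(X)$ is discrete, hence closed in $X$, so $\Gamma(u)$ --- being the graph of a continuous homomorphism defined on a closed subgroup and valued in the Hausdorff group $Y$ --- is closed in $\langle x\rangle\times Y$, hence in $X\times Y$; it is moreover central in $X\times Y$, since $\langle x\rangle$ is central in $X$ and $Y$ is abelian. Therefore $G=(X\times Y)/\Gamma(u)$ is a locally compact Hausdorff topological group. Connectedness is Lemma \ref{lem:XYG} applied with $X_1=\langle x\rangle$ and $Y_1=\langle y\rangle$: $u$ is continuous (its domain is discrete) and surjective onto $Y_1$, condition (1) holds because $[X,\langle x\rangle]=\{e\}$ and condition (2) because $Y$ is abelian, so $G$ is connected iff $\overline{Im(u)}=\overline{\langle y\rangle}=Y$, which is exactly the monothetic hypothesis.

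For (B): write $\pi\colon X\times Y\to G$ for the quotient map and let $\bar p\colon G\to X/Z(X)$ be the continuous surjective homomorphism induced by $(g,t)\mapsto gZ(X)$ (well defined because $\Gamma(u)\subseteq Z(X)\times Y$), noting that $X/Z(X)$ is the adjoint group of $X$ and hence $\equiv PSL_2(\mathbb{R})$. By construction $\ker\bar p=\pi(Z(X)\times Y)$, which is central in $G$ (again because $Z(X)$ is central in $X$ and $Y$ abelian), and conversely a central element $\pi(g,t)$ forces $g$ to commute with all of $X$, so $g\in Z(X)$; hence $Z(G)=\ker\bar p=\pi(Z(X)\times Y)$. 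Since $\Gamma(u)\subseteq Z(X)\times Y$ and $\pi$ is open, $Z(G)\equiv(Z(X)\times Y)/\Gamma(u)$; and the continuous surjection $Z(X)\times Y\to Y$ given by $(x^m,s)\mapsto s-my$ has kernel exactly $\Gamma(u)$ and $\sigma$-compact source, so by the Open Mapping Theorem (Lemma \ref{lem:omt}) it descends to a topological isomorphism $(Z(X)\times Y)/\Gamma(u)\equiv Y\equiv\mathbb{Z}_2$. In particular $Z(G)$ is compact, and being an infinite profinite group it contains a nontrivial subgroup inside every identity neighbourhood of $G$, so $G$ is not NSS; since by Theorem \ref{tm:LtoN} a pre-locally compact virtual Lie group with compact centre is NSS, it follows that $G$ is not a virtual Lie group.

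For (C): suppose $G=KL$ with $K\lhd G$ compact and connected and $L\lhd G$ a virtual Lie group. Then $\bar p(K)$ is a compact connected normal subgroup of $PSL_2(\mathbb{R})$; since $PSL_2(\mathbb{R})$ is simple as an abstract group and noncompact, $\bar p(K)=\{e\}$, so $K\subseteq\ker\bar p=Z(G)$, which is totally disconnected; the connected group $K$ is therefore trivial, and $G=L$ is a virtual Lie group, contradicting (B). I expect the main obstacle to be (B): computing $Z(G)$ exactly and recognising $(Z(X)\times Y)/\Gamma(u)$ as $\mathbb{Z}_2$ --- this hinges on $Z(X)$ being discrete (so that $\Gamma(u)$ is closed and the projection $Z(X)\times Y\to Y$ has kernel precisely $\Gamma(u)$) together with the Open Mapping Theorem for $\sigma$-compact groups. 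Once the central extension $1\to\mathbb{Z}_2\to G\to PSL_2(\mathbb{R})\to 1$ is in place, (C) follows immediately from the simplicity of $PSL_2(\mathbb{R})$ and the failure of NSS in $\mathbb{Z}_2$.
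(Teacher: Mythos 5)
Your proposal is correct, but it takes a genuinely different route from the paper. The paper argues entirely ``upstairs'': it sets $H=\pi^{-1}(K)$ in $X\times Y$, runs a case analysis on the identity component $H_0$ using Lemma \ref{lem:figh}, the Van Dantzig theorem and Proposition \ref{lem:fixy}, and derives the contradiction by forcing $G$ to be compact and hence $X=F\cdot Z(X)$ for a compact $F$, i.e.\ $PSL_2(\mathbb{R})$ compact. You instead work ``downstairs'': you compute $Z(G)\equiv(Z(X)\times Y)/\Gamma(u)\equiv\mathbb{Z}_2$ via the section $(x^m,s)\mapsto s-my$ and the Open Mapping Theorem, exhibit $G$ as a central extension $1\to\mathbb{Z}_2\to G\to PSL_2(\mathbb{R})\to 1$, kill $K$ by simplicity and noncompactness of $PSL_2(\mathbb{R})$ together with total disconnectedness of $Z(G)$, and kill the residual case $G=L$ by the failure of NSS. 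Your version buys more: it identifies $Z(G)$ exactly, shows $G$ is not NSS, and explicitly verifies connectedness via Lemma \ref{lem:XYG} (which the paper's proposition proof leaves implicit); the cost is reliance on the classical abstract (or topological) simplicity of $PSL_2(\mathbb{R})$, whereas the paper only needs that $X$ has no proper nontrivial closed connected normal subgroups. Two small points: in identifying $Z(G)$ you should note that a commutator $([g,g'],e)$ lying in $\Gamma(u)$ forces $m=0$ because $y$ has infinite order in $\mathbb{Z}_2$ (this is where injectivity of $u$ enters); and your appeal to Theorem \ref{tm:LtoN} is heavier than necessary, since $G$ is locally compact and a virtual Lie structure on it would already be a genuine Lie structure by Lemma \ref{lem:omt}, hence NSS.
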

\begin{proof}
Let us assume to the contrary that $K,L$ are normal subgroups of $G$ such that $K$ is compact connected, $L$ is a virtual Lie group and $G=KL$. Let $\pi:X\times Y\to G$ be the natural projection and let $H=\pi^{-1}(K)$. Then $H\lhd X\times Y$. Let $\xi=\pi|_H$. We observe that $H$ and $K$ are locally compact, $H$ is $\sigma$-compact and $\xi:H\to K$ is a continuous surjective homomorphism. By Open Mapping Theorem, $\xi$ is open. We now aim to prove

\begin{equation}\tag{$\bigstar$}
H_0\neq \{e\}.
\label{Honote}
\end{equation}

To that end let us assume that $H_0=\{e\}$. Then by the Van Dantzig Theorem (cf. \cite[Theorem 6.1.1]{Tao14}), $K=\{e\}$, which means that $G=L$, so $G$ is a Lie group. But, since $\pi|_Y$ is one-to-one $Y$ is embedded in $G$ by $\pi$ and thus $Y$ is also a Lie group. However, $Y$ is totally disconnected and compact, so $Y$ is finite, which leads to a contradiction. We now have shown (\ref{Honote}). Observe that, since $H\subseteq X\times Y$ and $Y$ is totally disconnected, $H_0\subseteq X\times \{e\}$. This means that $H_0=S\times \{e\}$ for some closed connected subgroup $S$ of $X$. Now $H_0$ is a characteristic subgroup of $H$ and $H$ is normal in $X\times Y$, so $H_0$ is normal in $X\times Y$. Thus $S\lhd X$ but $S$ is closed and connected and $X$ is a simple Lie group, so $S=\{e\}$ or $S=X$. Since the first option is not possible by (\ref{Honote}), $S=X$. By Lemma \ref{lem:figh}, $H=\xi^{-1}(K)=\overline{H_0\cdot \ker\xi}$. We further note that $\overline{H_0\cdot \ker\xi}=\overline{(S\times \{e\})\cdot \Gamma(u)}=\overline{(X\times \{e\})\cdot \Gamma(u)}=X\times\overline{Im(u)}=X\times Y$. Thus $\pi(X\times Y)=K$ which means that $G$ is compact. Now by Proposition \ref{lem:fixy} there exists a compact subset $F$ of $X$ such that $\pi(F\times Y)=G$. Now $\pi^{-1}(\pi(F\times Y))=X\times Y$, so $X\times Y=(F\times Y)\cdot \Gamma(u)=(F\cdot Z(X))\times Y$. Thus $F\cdot Z(X)=X$. This means that $X/Z(X)$ is compact, but $X/Z(X)\equiv SL_2(\mathbb{R})/Z(SL_2(\mathbb{R}))$ which leads to a contradiction.
\end{proof}

\bibliographystyle{amsalpha}
\bibliography{refs.bib}

\providecommand{\bysame}{\leavevmode\hbox to3em{\hrulefill}\thinspace}
\providecommand{\MR}{\relax\ifhmode\unskip\space\fi MR }
\providecommand{\MRhref}[2]{%
  \href{http://www.ams.org/mathscinet-getitem?mr=#1}{#2}
}
\providecommand{\href}[2]{#2}
\begin{thebibliography}{CdlH16}

\bibitem[AT08]{AT08}
A.~Arhangel'skii and M.~Tkachenko, \emph{Topological groups and related
  structures}, Atlantis Press, 2008.

\bibitem[CdlH16]{CdlH16}
Y.~Cornulier and P.~de~la Harpe, \emph{Metric geometry of locally compact
  groups}, EMS Tracts Math., Eur. Math. Soc., Z\"{u}rich, Switzerland, 2016.

\bibitem[Gle51]{Gle51}
A.~M. Gleason, \emph{The structure of locally compact groups}, Duke Math. J.
  \textbf{18} (1951), 85--104.

\bibitem[GP57]{GP57}
A.~Gleason and R.~Palais, \emph{On a class of transformation groups}, Amer. J.
  Math. \textbf{79} (1957), 631--648.

\bibitem[HM06]{HM06}
K.~H. Hofmann and S.~A. Morris, \emph{The structure of compact groups},
  augmented ed., De Gruyter Stud. Math., Walter de Gruyter \& Co., Berlin,
  Germany, 2006.

\bibitem[HM07]{HM07a}
\bysame, \emph{The {L}ie theory of connected pro-{L}ie groups}, EMS Tracts
  Math., Eur. Math. Soc., Z\"{u}rich, Switzerland, 2007.

\bibitem[HN12]{HN12}
J.~Hilgert and K.-H. Neeb, \emph{Structure and geometry of {Lie} groups},
  Springer Monogr. Math., Springer, 2012.

\bibitem[Iwa49]{Iwa49}
K.~Iwasawa, \emph{On some types of topological groups}, Ann. of Math., Second
  Series, Ann. of Math., 1949.

\bibitem[Tao14]{Tao14}
T.~Tao, \emph{Hilbert's fifth problem and related topics}, Grad. Stud. Math.,
  Am. Math. Soc., Providence, RI, 2014.

\bibitem[Yam50]{Yam50}
H.~Yamabe, \emph{On an arcwise connected subgroup of a {Lie} group}, Osaka
  Math. J. \textbf{2} (1950), 13--14.

\bibitem[Yam53]{Yam53}
\bysame, \emph{A generalization of a theorem of {G}leason}, Ann. of Math.
  \textbf{58} (1953), 351--365.

\end{thebibliography}

\end{document}